\newtheorem{theorem}{Theorem}
\newtheorem{corollary}[theorem]{Corollary}
\newtheorem{definition}[theorem]{Definition}
\newtheorem{example}[theorem]{Example}
\newtheorem{lemma}[theorem]{Lemma}
\newtheorem{notation}[theorem]{Notation}
\newtheorem{proposition}[theorem]{Proposition}
\newtheorem{remark}[theorem]{Remark}
\newenvironment{proof}[1][Proof]{\textbf{#1.} }{\ \rule{0.5em}{0.5em}}
\begin{document}

\title{Conditionally Evenly Convex Sets and Evenly Quasi-Convex Maps}
\author{Marco Frittelli\thanks{%
Dipartimento di Matematica, Universit\`{a} degli Studi di Milano.} \and %
Marco Maggis\thanks{%
Dipartimento di Matematica Universit\`{a} degli Studi di Milano.}}
\maketitle

\begin{abstract}
Evenly convex sets in a topological vector space are defined as the
intersection of a family of open half spaces. We introduce a generalization
of this concept in the conditional framework and provide a generalized
version of the bipolar theorem. This notion is then applied to obtain the
dual representation of conditionally evenly quasi-convex maps.
\end{abstract}

\section{Introduction}

A subset $C$ of a topological vector space is \emph{evenly convex} if it is
the intersection of a family of open half spaces, or equivalently, if every $%
x\notin C$ can be separated from $C$ by a continuous linear functional.
Obviously an evenly convex set is necessarily convex. This idea was firstly
introduced by Fenchel \cite{Fe52} aimed to determine the largest family of
convex sets $C$ for which the polarity $C=C^{00}$ holds true. It is well
known that in the framework of incomplete financial markets the Bipolar
Theorem is a key ingredient when we represent the super replication price of
a contingent claim in terms of the class of martingale measures. Recently
evenly convex sets and in particular evenly quasi-concave real valued
functions have been considered by Cerreia-Vioglio, Maccheroni, Marinacci and
Montrucchio in the context of Decision Theory \cite{CV09} and Risk Measures 
\cite{CMMMa}. Evenly quasiconcavity is the weakest notion that enables, in
the static setting, a \textit{complete} quasi-concave duality, which is a
key structural property regarding the dual representation of the behavioral
preferences and Risk Measures. Similarly Drapeau and Kupper \cite{DK10}
obtained a complete static quasi-convex duality under slightly different
conditions of the risk preferences structure that is strictly related to the
notion of evenly convexity.

\bigskip

In a conditional framework, as for example when $\mathcal{F}$ is a sigma
algebra containing the sigma algebra $\mathcal{G}$ and we deal with $%
\mathcal{G}$-conditional expectation, $\mathcal{G}$-conditional sublinear
expectation, $\mathcal{G}$-conditional risk measure, the analysis of the
duality theory is more delicate. We may consider conditional maps $\rho
:E\rightarrow L^{0}(\Omega ,\mathcal{G},\mathbb{P})$ defined either on
vector spaces (i.e. $E=L^{p}(\Omega ,\mathcal{F},\mathbb{P})$) or on $L^{0}$%
-modules (i.e. $E=L_{\mathcal{G}}^{p}(\mathcal{F}):=\left\{ yx\mid y\in
L^{0}(\Omega ,\mathcal{G},\mathbb{P})\text{ and }x\in L^{p}(\Omega ,\mathcal{%
F},\mathbb{P})\right\} $).

As described in details by Filipovic, Kupper and Vogelpoth \cite{FKV09}, 
\cite{FKV10} and by Guo \cite{Guo} the $L^{0}$-modules approach (see also
Section 3 for more details) is a very powerful tool for the analysis of
conditional maps and their dual representation.

\bigskip

In this paper we show that in order to achieve a conditional version of the
representation of evenly quasi-convex maps a good notion of evenly convexity
is crucial. We introduce the concept of a \emph{conditionally evenly convex
set, }which is tailor made for the conditional setting, in a framework that
exceeds the module setting alone, so that will be applicable in many
different context. \newline
In Section \ref{set} we provide the characterization of evenly convexity
(Theorem \ref{main} and Proposition \ref{prop24}) and state the conditional
version of the Bipolar Theorem (Theorem \ref{bipolar}). Under additional
topological assumptions, we show that conditionally convex sets that are
closed or open are conditionally evenly convex (see Section \ref{module&set}%
, Proposition \ref{LLSC}). As a consequence, the conditional evenly
quasiconvexity of a function, i.e. the property that the conditional lower
level sets are evenly convex, is a weaker assumption than quasiconvexity and
lower (or upper) semicontinuity.

\bigskip

In Section \ref{map} we apply the notion of conditionally evenly convex set
to the \textit{the dual representation of evenly quasiconvex maps, }i.e.%
\textit{\ }conditional maps $\rho :E\rightarrow L^{0}(\Omega ,\mathcal{G},%
\mathbb{P}) $ with the property that the conditional lower level sets are
evenly convex. We prove in Theorem \ref{EvQco} that an evenly quasiconvex
regular map $\pi:E\rightarrow \bar{L}^{0}(\mathcal{G})$ can be represented
as 
\begin{equation}
\pi (X)=\sup_{\mu \in \mathcal{L}(E,L^{0}(\mathcal{G}))}\mathcal{R}(\mu
(X),\mu ),  \label{rr}
\end{equation}%
where%
\begin{equation*}
\mathcal{R}(Y,\mu ):=\inf_{\xi \in E}\left\{ \pi (\xi )\mid \mu (\xi )\geq
Y\right\} ,\text{ }Y\in L^{0}(\mathcal{G}), 
\end{equation*}%
$E$ is a topological $L^{0}$-module and $\mathcal{L}(E,L^{0}(\mathcal{G}))$
is the module of continuous $L^{0}$-linear functionals over $E$.

\bigskip

The proof of this result is based on a version of the hyperplane separation
theorem and not on some approximation or scalarization arguments, as it
happened in the vector space setting (see \cite{FM09}). By carefully
analyzing the proof one may appreciate many similarities with the original
demonstration in the static setting by Penot and Volle \cite{PV}. One key
difference with \cite{PV}, in addition to the conditional setting, is the
continuity assumption needed to obtain the representation (\ref{rr}). We
work, as in \cite{CV09}, with evenly quasiconvex functions, an assumption
weaker than quasiconvexity and lower (or upper) semicontinuity. \newline
As explained in \cite{FM09} the representation of the type (\ref{rr}) is a
cornerstone in order to reach a robust representation of Quasi-convex Risk
Measures or Acceptability Indexes.

\section{On Conditionally Evenly Convex sets}

\label{set}

The probability space $(\Omega ,\mathcal{G},\mathbb{P})$ is fixed throughout
this paper. Whenever we will discuss conditional properties we will always
make reference, even without explicitly mentioning it in the notations - to
conditioning with respect to the sigma algebra $\mathcal{G}$.

We denote with $L^{0}=:L^{0}(\Omega ,\mathcal{G},\mathbb{P})$ the space of $%
\mathcal{G}$ measurable random variables that are $\mathbb{P}$ a.s. finite,
whereas by $\bar{L}^{0}$ the space of extended random variables which may
take values in $\mathbb{R}\cup \{\infty \}$. We remind that all
equalities/inequalities among random variables are meant to hold $\mathbb{P}$%
-a.s.. As the expected value $E_{\mathbb{P}}[\cdot ]$ is mostly computed
w.r.t. the reference probability $\mathbb{P}$, we will often omit $\mathbb{P}
$ in the notation. For any $A\in \mathcal{G}$ the element $\mathbf{1}_{A}\in
L^{0}$ is the random variable a.s. equal to $1$ on $A$ and $0$ elsewhere. In
general since $(\Omega ,\mathcal{G},\mathbb{P})$ are fixed we will always
omit them. We define $L_{+}^{0}=\{X\in L^{0}\mid X\geq 0\}$ and $%
L_{++}^{0}=\{X\in L^{0}\mid X>0\}$. \newline
The essential ($\mathbb{P}$ almost surely) \emph{supremum} $ess\sup_{\lambda
}(X_{\lambda })$ of an arbitrary family of random variables $X_{\lambda }\in
L^{0}(\Omega ,\mathcal{F},\mathbb{P})$ will be simply denoted by $%
\sup_{\lambda }(X_{\lambda })$, and similarly for the essential \emph{infimum%
} (see \cite{FoSch} Section A.5 for reference).

\begin{definition}[Dual pair]
\label{dualP}
\end{definition}

A dual pair $(E,E^{\prime },\langle \cdot ,\cdot \rangle )$ consists of:

\begin{enumerate}
\item $(E,+)$ (resp. $(E^{\prime },+)$) is any structure such that the
formal sum $x\mathbf{1}_{A}+y\mathbf{1}_{A^{C}}$ belongs to $E$ (resp. $%
x^{\prime }\mathbf{1}_{A}+y^{\prime }\mathbf{1}_{A^{C}}\in E^{\prime }$) for
any $x,y\in E$ (resp. $x^{\prime },y^{\prime }\in E^{\prime }$) and $A\in 
\mathcal{G}$ with $\mathbb{P}(A)>0$ and there exists an null element $0\in E$
(resp. $0\in E^{\prime }$) such that $x+0=x$ for all $x\in E$ (resp. $%
x^{\prime }+0=x^{\prime }$ for all $x^{\prime }\in E^{\prime }$).

\item A map $\langle \cdot ,\cdot \rangle :E\times E^{\prime }\rightarrow
L^{0}$ such that 
\begin{eqnarray*}
\langle x\mathbf{1}_{A}+y\mathbf{1}_{A^{C}},x^{\prime }\rangle &=&\langle
x,x^{\prime }\rangle \mathbf{1}_{A}+\langle y,x^{\prime }\rangle \mathbf{1}%
_{A^{C}} \\
\langle x,x^{\prime }\mathbf{1}_{A}+y^{\prime }\mathbf{1}_{A^{C}}\rangle
&=&\langle x,x^{\prime }\rangle \mathbf{1}_{A}+\langle x,y^{\prime }\rangle 
\mathbf{1}_{A^{C}} \\
\langle 0,x^{\prime }\rangle =0 &\text{and}&\langle x,0\rangle =0
\end{eqnarray*}%
for every $A\in \mathcal{G}$, $\mathbb{P}(A)>0$ and $x,y\in E$, $x^{\prime
},y^{\prime }\in E^{\prime }$.
\end{enumerate}

Clearly in many applications $E$ will be a class of random variables (as
vector lattices, or $L^{0}$-modules as in the Examples \ref{ex3} and \ref%
{exmod}) and $E^{\prime }$ is a selection of conditional maps, for example
conditional expectations, sublinear conditional expectations, conditional
risk measures.

We recall from \cite{FKV09} an important type of concatenation:

\begin{definition}[Countable Concatenation Hull]
.

\begin{description}
\item[(CSet)] A subset $\mathcal{C}\subset E$ has the countable
concatenation property if for every countable partition $\{A_{n}\}_{n}%
\subseteq \mathcal{G}$ and for every countable collection of elements $%
\{x_n\}_n\subset \mathcal{C}$ we have $\sum_{n}\mathbf{1}_{A_{n}}x_n\in 
\mathcal{C}$.
\end{description}

Given $\mathcal{C\subseteq }E$, we denote by $\mathcal{C}^{cc}$ the
countable concatenation hull of $\mathcal{C}$, namely the smallest set $%
\mathcal{C}^{cc}\supseteq \mathcal{C}$ which satisfies (CSet): 
\begin{equation*}
\mathcal{C}^{cc}=\left\{ \sum_{n}\mathbf{1}_{A_{n}}x_{n}\mid x_{n}\in 
\mathcal{C}\text{, }\{A_{n}\}_{n}\subseteq \mathcal{G}\text{ is a partition
of }\Omega \right\} . 
\end{equation*}%
These definitions can be plainly adapted to subsets of $E^{\prime }$. 
\newline
The action of an element $\xi ^{\prime }=\sum_{m}\mathbf{1}%
_{B_{m}}x_{m}^{\prime }\in (E^{\prime })^{cc}$ over $\xi =\sum_{n}\mathbf{1}%
_{A_{n}}x_{n}\in E^{cc}$ is defined as 
\begin{equation}
\langle \xi ,\xi ^{\prime }\rangle =\left\langle \sum_{n}\mathbf{1}%
_{A_{n}}x_{n},\sum_{m}\mathbf{1}_{B_{m}}x_{m}^{\prime }\right\rangle
=\sum_{n}\sum_{m}\langle x_{n},x_{m}^{\prime }\rangle \mathbf{1}_{A_{n}\cap
B_{m}}  \label{FuncExt}
\end{equation}%
and does not depend on the representation of $\xi ^{\prime }\in (E^{\prime
})^{cc}$ and $\xi \in \mathcal{C}^{cc}$.
\end{definition}

\begin{example}
\label{ex3}Let $\mathcal{F}$ be a sigma algebra containing $\mathcal{G}$.
Consider the vector space $E:=L^{p}(\mathcal{F}):=L^{p}(\Omega ,\mathcal{F},%
\mathbb{P})$, for $p\geq 1$. If we compute the countable concatenation hull
of $L^{p}(\mathcal{F})$ we obtain exactly the $L^{0}$-module 
\begin{equation*}
L_{\mathcal{G}}^{p}(\mathcal{F}):=\left\{ yx\mid y\in L^{0}(\mathcal{G})%
\text{ and }x\in L^{p}(\mathcal{F})\right\} 
\end{equation*}%
as introduced in \cite{FKV09} and \cite{FKV10} (see Example \ref{exmod} for
more details).

Similarly, the class of conditional expectations $\mathcal{E}=\{E[\;\cdot Z|%
\mathcal{G}]\mid Z\in L^{q}(\Omega ,\mathcal{F},\mathbb{P})\}$ and $\frac{1}{%
p}+\frac{1}{q}=1$ can be identified with the space $L^q(\mathcal{F} )$.
Hence the countable concatenation hull $\mathcal{E}^{cc}$ will be exactly $%
L^q_{\mathcal{G} }(\mathcal{F} )$, the dual $L^{0}$-module of $L_{\mathcal{G}%
}^{p}(\mathcal{F})$.
\end{example}

If $E$ (or $E^{\prime })$ does not fulfill (CSet) we can always embed the
theory in its concatenation hull and henceforth we make the following:

\bigskip

\noindent \textbf{Assumption:} In the sequel of this paper we always suppose
that both $E$ and $E^{\prime }$ satisfies (CSet).

\bigskip

We recall that a subset $C$ of a locally convex topological vector space $V$
is \emph{evenly convex} if it is the intersection of a family of open half
spaces, or equivalently, if every $x\notin C$ can be separated from $C$ by a
continuous real valued linear functional. As the intersection of an empty
family of half spaces is the entire space $V$, the whole space $V$ itself is
evenly convex.

However, in order to introduce the concept of conditional evenly convex set
(with respect to $\mathcal{G}$) we need to take care of the fact that the
set $C$ may present some components which degenerate to the entire $E$.
Basically it might occur that for some $A\in \mathcal{G}$ 
\begin{equation*}
C\mathbf{1}_{A}=E\mathbf{1}_{A}, 
\end{equation*}%
i.e., for each $x\in E$ there exists $\xi \in C$ such that $\xi \mathbf{1}%
_{A}=x\mathbf{1}_{A}$. In this case there are no chances of finding an $x\in
E$ satisfying $\mathbf{1}_{A}C\cap \mathbf{1}_{A}\{x\}=\varnothing $ and
consequently no conditional separation may occur. It is clear that the
evenly convexity property of a set $C$ is meaningful only on the set where $%
C $ does not coincide with the entire $E$. Thus we need to determine the
maximal $\mathcal{G}$-measurable set on which $C$ reduces to $E$. To this
end, we set the following notation that will be employed many times.

\begin{notation}
\label{trivialcomponent}Fix a set $\mathcal{C}\subseteq E$. As the class $%
\mathcal{A}(\mathcal{C}):=\{A\in \mathcal{G}\mid \mathcal{C}\mathbf{1}_{A}=E%
\mathbf{1}_{A}\}$ is closed with respect to countable union, we denote with $%
A_{\mathcal{C}}$ the $\mathcal{G}$-measurable maximal element of the class $%
\mathcal{A}(\mathcal{C})$ and with $D_{\mathcal{C}}$ the (P-a.s. unique)
complement of $A_{\mathcal{C}}$ (see also the Remark \ref{remMAX}). Hence $%
\mathcal{C}\mathbf{1}_{A_{\mathcal{C}}}=E\mathbf{1}_{A_{\mathcal{C}}}.$
\end{notation}

We now give the formal definition of conditionally evenly convex set in
terms of intersections of hyperplanes in the same spirit of \cite{Fe52}.

\begin{definition}
A set $\mathcal{C}\subseteq E$ is conditionally evenly convex if there exist 
$\mathcal{L}\subseteq E^{\prime }$ (in general non-unique and empty if $%
\mathcal{C}=E$) such that 
\begin{equation}
\mathcal{C}=\bigcap_{x^{\prime }\in \mathcal{L}}\{x\in E\mid \langle
x,x^{\prime }\rangle <Y_{x^{\prime }}\text{ on }D_{\mathcal{C}}\}\quad \text{%
for some }Y_{x^{\prime }}\in L^{0}.  \label{definition}
\end{equation}
\end{definition}

\begin{remark}
Notice that for any arbitrary $D\in\mathcal{G} $, $\mathcal{L}\subseteq
E^{\prime }$ the set 
\begin{equation*}
\mathcal{C}= \bigcap_{x^{\prime }\in \mathcal{L}}\{x\in E\mid \langle
x,x^{\prime }\rangle <Y_{x^{\prime }}\text{ on }D \} \quad \text{ for some }
Y_{x^{\prime }}\in L^{0} 
\end{equation*}
is evenly convex, even though in general $D_{\mathcal{C}}\subseteq D$.
\end{remark}

\begin{remark}
\label{remark} We observe that since $E$ satisfies (CSet) then automatically
any conditionally evenly convex set satisfies (CSet). As a consequence there
might exist a set $\mathcal{C}$ which fails to be conditionally evenly
convex, since does not satisfy (CSet), but $\mathcal{C}^{cc}$ is
conditionally evenly convex. Consider for instance $E=L^1_{\mathcal{G} }(%
\mathcal{F} ), E^{\prime}=L^{\infty}_{\mathcal{G} }(\mathcal{F} )$, endowed
with the pairing $\langle x,x^{\prime}\rangle=E[xx^{\prime} | \mathcal{G} ]$%
. Fix $x^{\prime}\in L^{\infty}(\mathcal{F} ), Y\in L^0(\mathcal{G} )$ and
the set 
\begin{equation*}
\mathcal{C}=\{x\in L^1(\mathcal{F} ) \mid E[xx^{\prime}\mid \mathcal{G} ]<
Y\}. 
\end{equation*}
Clearly $\mathcal{C}$ is not conditionally evenly convex since $\mathcal{C}%
\subsetneqq \mathcal{C}^{cc}$; on the other hand 
\begin{equation*}
\mathcal{C}^{cc}=\{x\in L^1_{\mathcal{G} }(\mathcal{F} ) \mid E[xx^{\prime} |%
\mathcal{G} ]< Y\} 
\end{equation*}
which is by definition evenly convex.
\end{remark}

\begin{remark}
\label{convexity} Recall that a set $C\subseteq E$ is $L^{0}$\emph{-convex}
if $\Lambda x+(1-\Lambda )y\in C$ for any $x,y\in C$ and $\Lambda \in L^{0}$
with $0\leq \Lambda \leq 1$. \newline
Suppose that all the elements $x^{\prime }\in E^{\prime }$ satisfy: 
\begin{equation*}
\langle \Lambda x+(1-\Lambda )y,x^{\prime }\rangle \leq \Lambda \langle
x,x^{\prime }\rangle +(1-\Lambda )\langle y,x^{\prime }\rangle \text{, for
all }x,y\in E\text{, }\Lambda \in L^{0}\text{: }0\leq \Lambda \leq 1. 
\end{equation*}%
If $E$ is $L^{0}-$convex then every conditionally evenly convex set is also $%
L^{0}-$convex.
\end{remark}

In order to separate one point $x\in E$ from a set $C\subseteq E$ in a
conditional way we need the following definition:

\begin{definition}
For $x\in E$ and a subset $\mathcal{C}$ of $E,$ we say that $x$ is \emph{%
outside} $\mathcal{C}$ if $\mathbf{1}_{A}\{x\}\cap \mathbf{1}_{A}\mathcal{C}%
=\varnothing $ for every $A\in \mathcal{G}$ with $A\subseteq D_{\mathcal{C}}$
and $\mathbb{P}(A)>0$.
\end{definition}

This is of course a much stronger requirement than $x\notin \mathcal{C}$.

\begin{definition}
For $\mathcal{C}\subseteq E$ we define the polar and bipolar sets as follows 
\begin{eqnarray*}
\mathcal{C}^{\circ }:\,= &&\left\{ x^{\prime }\in E^{\prime }\mid \langle
x,x^{\prime }\rangle <1\text{ on }D_{\mathcal{C}}\text{ for all }x\in 
\mathcal{C}\right\} , \\
\mathcal{C}^{\circ \circ }:\,= &&\left\{ x\in E\mid \langle x,x^{\prime
}\rangle <1\text{ on }D_{\mathcal{C}}\text{ for all }x^{\prime }\in \mathcal{%
C}^{\circ }\right\} \\
&&\,\,=\bigcap\limits_{x^{\prime }\in \mathcal{C}^{\circ }}\left\{ x\in
E\mid \langle x,x^{\prime }\rangle <1\text{ on }D_{\mathcal{C}}\right\} .
\end{eqnarray*}
\end{definition}

\noindent We now state the main results of this note about the
characterization of evenly convex sets and the Bipolar Theorem. Their proofs
are postponed to the Section 4.

\begin{theorem}
\label{main}Let $(E,E^{\prime },\langle \cdot ,\cdot \rangle )$ be a dual
pairing introduced in Definition \ref{dualP} and let $\mathcal{C}\subseteq E$%
. The following statements are equivalent:

\begin{enumerate}
\item $\mathcal{C}$ is conditionally evenly convex.

\item $\mathcal{C}$ satisfies (CSet) and for every $x$ \emph{outside} $%
\mathcal{C}$ there exists $x^{\prime }\in E^{\prime }$ such that 
\begin{equation*}
\langle \xi ,x^{\prime }\rangle <\langle x,x^{\prime }\rangle \text{ on }D_{%
\mathcal{C}},\;\forall \,\xi \in \mathcal{C}. 
\end{equation*}
\end{enumerate}
\end{theorem}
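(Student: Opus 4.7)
The plan is to prove the two implications separately. (CSet) of $E^{\prime}$ will be used to assemble a global separating functional from local ones in (1) $\Rightarrow$ (2), while (CSet) of $\mathcal{C}$ and a maximality argument inside $E$ will be used in (2) $\Rightarrow$ (1) to reduce a point of the alleged intersection to one which is genuinely outside $\mathcal{C}$.

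For (1) $\Rightarrow$ (2) I take a representation $\mathcal{C}=\bigcap_{x^{\prime}\in\mathcal{L}}\{y:\langle y,x^{\prime}\rangle<Y_{x^{\prime}}\text{ on }D_{\mathcal{C}}\}$. Since each defining half-space is stable under countable concatenation and $E$ has (CSet), so does $\mathcal{C}$. Fix $x$ outside $\mathcal{C}$ and, for each $x^{\prime}\in\mathcal{L}$, set $B_{x^{\prime}}:=\{\langle x,x^{\prime}\rangle\geq Y_{x^{\prime}}\}\cap D_{\mathcal{C}}$. The key sub-claim is that the essential union of $\{B_{x^{\prime}}\}$ exhausts $D_{\mathcal{C}}$: otherwise some $A\subseteq D_{\mathcal{C}}$ with $\mathbb{P}(A)>0$ would satisfy $\langle x,x^{\prime}\rangle<Y_{x^{\prime}}$ on $A$ for every $x^{\prime}\in\mathcal{L}$, so that the concatenation $\xi:=x\mathbf{1}_{A}+\xi_{0}\mathbf{1}_{A^{c}}$ (with any fixed $\xi_{0}\in\mathcal{C}$) would belong to $\mathcal{C}$ while agreeing with $x$ on $A$, contradicting "outside". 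A standard exhaustion extracts a countable $\{x_{n}^{\prime}\}\subseteq\mathcal{L}$ whose $B_{x_{n}^{\prime}}$'s cover $D_{\mathcal{C}}$, which I disjointify into a $\mathcal{G}$-partition $\{C_{n}\}$ of $D_{\mathcal{C}}$. Using (CSet) of $E^{\prime}$, the paste $x^{\prime}:=\sum_{n}\mathbf{1}_{C_{n}}x_{n}^{\prime}+\mathbf{1}_{A_{\mathcal{C}}}x_{0}^{\prime}$ lies in $E^{\prime}$, and on each $C_{n}$ the bilinearity of the pairing together with $\langle\xi,x_{n}^{\prime}\rangle<Y_{x_{n}^{\prime}}\leq\langle x,x_{n}^{\prime}\rangle$ for $\xi\in\mathcal{C}$ yields exactly the separation asserted in (2).

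For (2) $\Rightarrow$ (1) I take $\mathcal{L}$ to be the set of pairs $(x^{\prime},Y)\in E^{\prime}\times L^{0}$ with $\langle\xi,x^{\prime}\rangle<Y$ on $D_{\mathcal{C}}$ for every $\xi\in\mathcal{C}$, and let $\tilde{\mathcal{C}}$ be the corresponding intersection of open half-spaces. The inclusion $\mathcal{C}\subseteq\tilde{\mathcal{C}}$ is free; only the reverse requires work. Given $x\in\tilde{\mathcal{C}}\setminus\mathcal{C}$, I let $A^{\ast}$ be the maximal $\mathcal{G}$-subset of $D_{\mathcal{C}}$ on which $x$ coincides with some element of $\mathcal{C}$; the class is closed under countable unions by (CSet) of $\mathcal{C}$, so a maximum exists. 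The assumption $x\notin\mathcal{C}$ forces $A^{\ast}\subsetneq D_{\mathcal{C}}$, since otherwise gluing the $A^{\ast}$-witness with any element of $\mathcal{C}$ on $A_{\mathcal{C}}$ (where $\mathcal{C}\mathbf{1}_{A_{\mathcal{C}}}=E\mathbf{1}_{A_{\mathcal{C}}}$) would exhibit $x$ in $\mathcal{C}$. Setting $D^{\ast}:=D_{\mathcal{C}}\setminus A^{\ast}$, I construct by an analogous exhaustion inside $E$ an auxiliary $\bar y\in E$ which is outside $\mathcal{C}$ on all of $D_{\mathcal{C}}$, and replace $x$ on $A^{\ast}\cup A_{\mathcal{C}}$ by $\bar y$, forming $\hat x:=x\mathbf{1}_{D^{\ast}}+\bar y\mathbf{1}_{(D^{\ast})^{c}}$. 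The maximality of $A^{\ast}$ rules out any agreement of $\mathcal{C}$ with $\hat x$ on a positive-probability subset of $D^{\ast}$, and the global outsideness of $\bar y$ does the same on $A^{\ast}$, so $\hat x$ is outside $\mathcal{C}$. Applying (2) yields $x^{\prime}\in E^{\prime}$ with $\langle\xi,x^{\prime}\rangle<\langle\hat x,x^{\prime}\rangle$ on $D_{\mathcal{C}}$ for every $\xi\in\mathcal{C}$, so $(x^{\prime},\langle\hat x,x^{\prime}\rangle)\in\mathcal{L}$. Bilinearity gives $\langle\hat x,x^{\prime}\rangle=\langle x,x^{\prime}\rangle$ on $D^{\ast}$, whence $x\in\tilde{\mathcal{C}}$ would demand $\langle x,x^{\prime}\rangle<\langle x,x^{\prime}\rangle$ on the positive-probability set $D^{\ast}$, the desired contradiction.

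The main obstacle is precisely the gap between "$x\notin\mathcal{C}$" and "$x$ outside $\mathcal{C}$": hypothesis (2) separates only genuinely outside points, so a surgery on $x$ over its "bad" region $A^{\ast}$ is unavoidable. The technical device that makes the surgery work is the existence of a globally outside $\bar y$, whose construction rests on the closure of $\{A\subseteq D_{\mathcal{C}}:\exists\,y\in E,\ y\text{ outside }\mathcal{C}\text{ on }A\}$ under countable unions — that is, precisely on (CSet). Boundary cases $\mathcal{C}=\varnothing$ and $\mathcal{C}=E$ (where $D_{\mathcal{C}}=\varnothing$ and both statements are trivial with $\mathcal{L}=\varnothing$) need separate inspection but introduce no difficulty.
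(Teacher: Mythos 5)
Your argument is correct and follows essentially the same route as the paper: the crux of (2) $\Rightarrow$ (1) is the surgery replacing a point $x\notin\mathcal{C}$ on its maximal agreement set by a genuinely outside element (your $\hat x$ is the paper's $y_0=x\mathbf{1}_{H_{\mathcal{C},x}}+y\mathbf{1}_{\Omega\setminus H_{\mathcal{C},x}}$, and the existence of your $\bar y$ is exactly the content of the paper's Lemmas \ref{LemmaGuoSet}, \ref{ln} and \ref{external}). Your (1) $\Rightarrow$ (2) is in fact more careful than the paper's, which asserts that a single $x'\in\mathcal{L}$ already satisfies $\langle x,x'\rangle\geq Y_{x'}$ on all of $D_{\mathcal{C}}$; your exhaustion of $D_{\mathcal{C}}$ by the sets $B_{x'}$ followed by a countable concatenation in $E'$ is the honest way to produce the separating functional when no single element of $\mathcal{L}$ dominates everywhere.
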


\begin{theorem}[Bipolar Theorem]
\label{bipolar}Let $(E,E^{\prime },\langle \cdot ,\cdot \rangle )$ be a dual
pairing introduced in Definition \ref{dualP} and assume in addition that the
pairing $\langle \cdot ,\cdot \rangle $ is $L^{0}$-linear in the first
component i.e. 
\begin{equation*}
\langle \alpha x+\beta y,x^{\prime }\rangle =\alpha \langle x,x^{\prime
}\rangle +\beta \langle x,x^{\prime }\rangle 
\end{equation*}%
for every $x^{\prime }\in E^{\prime }$, $x,y\in E$, $\alpha ,\beta \in L^{0}$%
. For any $\mathcal{C}\subseteq E$ such that $0\in \mathcal{C}$ we have:

\begin{enumerate}
\item $\mathcal{C}^{\circ }=\left\{ x^{\prime }\in E^{\prime }\mid \langle
x,x^{\prime }\rangle <1\text{ on }D_{\mathcal{C}}\text{ for all }x\in 
\mathcal{C}^{cc}\right\} $

\item The bipolar $\mathcal{C}^{\circ \circ }$ is a conditionally evenly
convex set containing $\mathcal{C}$.

\item The set $\mathcal{C}$ is conditionally evenly convex if and only if $%
\mathcal{C}=\mathcal{C}^{\circ \circ }$.
\end{enumerate}
\end{theorem}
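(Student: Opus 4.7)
The plan is to treat the three parts with increasing difficulty: (1) reduces to the concatenation identity of the pairing, (2) is essentially definitional, and (3) requires a rescaling construction that is the technical core.

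For (1), the inclusion $\supseteq$ is immediate since $\mathcal{C}\subseteq\mathcal{C}^{cc}$ makes the right-hand side condition formally stronger. For the reverse, pick $x'\in\mathcal{C}^{\circ}$ and any $y=\sum_n\mathbf{1}_{A_n}x_n\in\mathcal{C}^{cc}$ with $x_n\in\mathcal{C}$; the extended pairing (\ref{FuncExt}) gives $\langle y,x'\rangle=\sum_n\mathbf{1}_{A_n}\langle x_n,x'\rangle$, which is $<1$ on $D_{\mathcal{C}}$ fiberwise because exactly one indicator activates at each $\omega$ and each $\langle x_n,x'\rangle<1$ on $D_{\mathcal{C}}$.

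For (2), $\mathcal{C}\subseteq\mathcal{C}^{\circ\circ}$ is tautological from the polar and bipolar definitions. The bipolar is by construction the intersection $\bigcap_{x'\in\mathcal{C}^{\circ}}\{z:\langle z,x'\rangle<1\text{ on }D_{\mathcal{C}}\}$, matching the form (\ref{definition}) with constants $Y_{x'}=1$; the remark right after that display confirms that intersections of this type are conditionally evenly convex even if $D_{\mathcal{C}^{\circ\circ}}$ strictly differs from $D_{\mathcal{C}}$.

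For (3), the ``if'' direction follows from (2). For ``only if'', represent $\mathcal{C}=\bigcap_{x'\in\mathcal{L}}\{z:\langle z,x'\rangle<Y_{x'}\text{ on }D_{\mathcal{C}}\}$; since $0\in\mathcal{C}$ and $\langle 0,x'\rangle=0$, each $Y_{x'}>0$ on $D_{\mathcal{C}}$. The crucial step is, for each $x'\in\mathcal{L}$, to produce a normalized $\tilde{x}'\in E'$ with $\tilde{x}'\in\mathcal{C}^{\circ}$ and $\{z:\langle z,\tilde{x}'\rangle<1\text{ on }D_{\mathcal{C}}\}=\{z:\langle z,x'\rangle<Y_{x'}\text{ on }D_{\mathcal{C}}\}$; granted this, each defining half-space of $\mathcal{C}$ contains $\mathcal{C}^{\circ\circ}$, whence $\mathcal{C}^{\circ\circ}\subseteq\mathcal{C}$, and (2) yields equality. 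The main obstacle is constructing $\tilde{x}'$: the $L^{0}$-linearity hypothesis acts on the first component of the pairing, whereas the intended rescaling ``$x'/Y_{x'}$'' acts on the second; I would circumvent this using the countable concatenation structure of $E'$ together with the $L^0$-linearity in the first component to realize division by $Y_{x'}$ piecewise, verifying the pairing identity through (\ref{FuncExt}).
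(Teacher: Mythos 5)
Parts (1) and (2) of your plan coincide with the paper's (the paper dismisses both as straightforward, and your fiberwise computation via (\ref{FuncExt}) is the intended justification of (1)). For part (3) your route is genuinely different. The paper argues by contradiction: it takes $x\in\mathcal{C}^{\circ\circ}\setminus\mathcal{C}$, invokes Proposition \ref{prop24} to separate $x$ from $\mathcal{C}$ on the set $H_{\mathcal{C},x}$ of Lemma \ref{LemmaGuoSet}, observes that $0\in\mathcal{C}$ forces $\langle x,x'\rangle>0$ on $H_{\mathcal{C},x}$, and then builds $y':=\frac{x'}{\langle x,x'\rangle}\mathbf{1}_{H}+x_1'\mathbf{1}_{\Omega\setminus H}$ with $x_1'\in\mathcal{C}^{\circ}$ arbitrary, getting $y'\in\mathcal{C}^{\circ}$ but $\langle x,y'\rangle=1$ on $H$, contradicting $x\in\mathcal{C}^{\circ\circ}$. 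You instead normalize every defining half-space of $\mathcal{C}$ to level $1$ and show the normalized functionals all lie in $\mathcal{C}^{\circ}$, so that $\mathcal{C}^{\circ\circ}$ sits inside the original intersection. Your version is more direct (no contradiction, no need for Proposition \ref{prop24} or the set $H_{\mathcal{C},x}$) and uses $0\in\mathcal{C}$ in the exactly analogous way ($Y_{x'}>0$ on $D_{\mathcal{C}}$ versus $\langle x,x'\rangle>0$ on $H$); it does require the small fix of redefining $Y_{x'}$ to be, say, $1$ on $A_{\mathcal{C}}$ so that the divisor is in $L^0_{++}$.

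The one real problem is the mechanism you propose for the normalization itself. Countable concatenation in $E'$ only glues \emph{existing} elements of $E'$ along a partition, and $L^0$-linearity in the \emph{first} component says nothing about rescaling the second argument; neither, separately or together, produces an element $\tilde{x}'$ with $\langle z,\tilde{x}'\rangle=\langle z,x'\rangle/Y_{x'}$ (even when $Y_{x'}$ is piecewise constant you would need $\frac{1}{c}x'$ to already belong to $E'$). What is actually needed is that $E'$ be stable under multiplication by $L^0_{++}$ with the pairing homogeneous in the second variable --- true in the module setting of Section \ref{module&set} where $E'=E^{\ast}$, but not derivable from Definition \ref{dualP} plus the stated first-component linearity. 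To be fair, the paper's own proof performs the identical division $\frac{x'}{\langle x,x'\rangle}$ without comment, so this is an implicit standing hypothesis rather than a defect specific to your argument; but as written your ``circumvention'' does not close the step you correctly identified as the crux, and you should instead state the required closure property of $E'$ explicitly.
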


Suppose that the set $\mathcal{C}\subseteq E$ is a $L^{0}$-cone, i.e. $%
\alpha x\in \mathcal{C}$ for every $x\in \mathcal{C}$ and $\alpha \in
L_{++}^{0}$. In this case, it is immediate to verify that the polar and
bipolar can be rewritten as: 
\begin{eqnarray}
\mathcal{C}^{\circ } &&\,=\left\{ x^{\prime }\in E^{\prime }\mid \langle
x,x^{\prime }\rangle \leq 0\text{ on }D_{\mathcal{C}}\text{ for all }x\in 
\mathcal{C}\right\} ,  \notag \\
\mathcal{C}^{\circ \circ } &&\,=\left\{ x\in E\mid \langle x,x^{\prime
}\rangle \leq 0\text{ on }D_{\mathcal{C}}\text{ for all }x^{\prime }\in 
\mathcal{C}^{\circ }\right\} .  \label{222}
\end{eqnarray}

\section{On Conditionally Evenly Quasi-Convex maps}

\label{map}

Here we state the dual representation of conditional evenly quasiconvex maps
of the Penot-Volle type which extends the results obtained in \cite{FM09}
for topological vector spaces. We work in the general setting outlined in
Section 2. The additional basic property that is needed is regularity.

\begin{definition}
\label{defdef1}A map $\pi :E\rightarrow \bar{L}^{0}$ is

\begin{description}
\item[(REG)] regular if for every $x_{1},x_{2}\in E$ and $A\in \mathcal{G}$, 
\begin{equation*}
\pi (x_{1}\mathbf{1}_{A}+x_{2}\mathbf{1}_{A^{C}})=\pi (x_{1})\mathbf{1}%
_{A}+\pi (x_{2})\mathbf{1}_{A^{C}}. 
\end{equation*}
\end{description}
\end{definition}

\begin{remark}
\label{remB}(On REG) It is well known that (REG) is equivalent to:%
\begin{equation*}
\pi (x\mathbf{1}_{A})\mathbf{1}_{A}=\pi (x)\mathbf{1}_{A}\text{, }\forall
A\in \mathcal{G}\text{, }\forall x\in E. 
\end{equation*}
Under the countable concatenation property it is even true that (REG) is
equivalent to countably regularity, i.e. 
\begin{equation*}
\pi (\sum_{i=1}^{\infty }x_{i}\mathbf{1}_{A_{i}})=\sum_{i=1}^{\infty }\pi
(x_{i})\mathbf{1}_{A_{i}}\text{ on \ }\cup _{i=1}^{\infty }A_{i} 
\end{equation*}%
if $x_{i}\in E$ and $\left\{ A_{i}\right\} _{i}$ is a sequence of disjoint $%
\mathcal{G}$ measurable sets. Indeed $x:=\sum_{i=1}^{\infty }x_{i}\mathbf{1}%
_{A_{i}}\in E$ and $\sum_{i=1}^{\infty }\pi (x_{i})\mathbf{1}_{A_{i}}\in 
\bar{L}^{0}$; (REG) then implies $\pi (x)\mathbf{1}_{A_{i}}=\pi (x\mathbf{1}%
_{A_{i}})\mathbf{1}_{A_{i}}=\pi (x_{i}\mathbf{1}_{A_{i}})\mathbf{1}%
_{A_{i}}=\pi (x_{i})\mathbf{1}_{A_{i}}$.
\end{remark}

Let $\pi :E\rightarrow \bar{L}^{0}$ be (REG). There might exist a set $A\in 
\mathcal{G}$ on which the map $\pi$ is infinite, in the sense that $\pi (\xi
)\mathbf{1}_{A}=+\infty\mathbf{1}_{A}$ for every $\xi \in E$. For this
reason we introduce 
\begin{equation*}
\mathcal{M}:=\{A\in \mathcal{G}\mid \pi (\xi )\mathbf{1}_{A}=+\infty\mathbf{1%
}_{A}\;\forall \,\xi \in E\}. 
\end{equation*}
Applying Lemma \ref{L10} in Appendix with $F:=\left\{ \pi (\xi )\mid \xi \in
E\right\} $ and $Y_{0}=+\infty$ we can deduce the existence of two maximal
sets $T_{\pi }\in \mathcal{G}$ and $\Upsilon _{\pi }\in \mathcal{G}$ for
which $P(T_{\pi }\cap \Upsilon _{\pi })=0$, $P(T_{\pi }\cup \Upsilon _{\pi
})=1$ and 
\begin{eqnarray}
\pi (\xi )=+\infty &\text{ on }\Upsilon _{\pi }&\text{for every }\xi \in E, 
\notag \\
\pi (\zeta)<+\infty &\text{ on }T_{\pi }&\text{ for some }\zeta\in E.
\label{888}
\end{eqnarray}

\begin{definition}
\label{EQC} A map $\pi :E\rightarrow \bar{L}^{0}(\mathcal{G})$ is

\begin{description}
\item[(QCO)] conditionally quasiconvex if $U_Y= \{\xi\in E\mid \pi(\xi)%
\mathbf{1}_{T_{\pi}}\leq Y\}$ are $L^0$-convex (according to Remark \ref%
{convexity}) for every $Y\in L^0(\mathcal{G} )$.

\item[(EQC)] conditionally evenly quasiconvex if $U_Y= \{\xi\in E\mid
\pi(\xi)\mathbf{1}_{T_{\pi}}\leq Y\}$ are conditionally evenly convex for
every $Y\in L^0(\mathcal{G} )$.
\end{description}
\end{definition}

\begin{remark}
\label{remA} For $\pi :E\rightarrow \bar{L}^{0}(\mathcal{G})$ the
quasiconvexity of $\pi $ is equivalent to the condition 
\begin{equation}
\pi (\Lambda x_{1}+(1-\Lambda )x_{2})\leq \pi (x_{1})\vee \pi (x_{2}),
\label{max}
\end{equation}
for every $x_{1},x_{2}\in E$, $\Lambda \in L^{0}(\mathcal{G})$ and $0\leq
\Lambda \leq 1$. In this case the sets $\{\xi \in E\mid \pi (\xi )1_{D}<Y\}$
are $L^{0}(\mathcal{G})$-convex for every $Y\in \bar{L}^{0}(\mathcal{G})$
and $D\in \mathcal{G}$ (This follows immediately from (\ref{max})). \newline
Moreover under the further structural property of Remark \ref{convexity} we
have that (EQC) implies (QCO). We will see in the $L^0$-modules framework
that if the map $\pi$ is either lower semicontinuous or upper semicontinuous
then the reverse implication holds true (see Proposition \ref{LLSC},
Corollary \ref{LSC} and Proposition \ref{USC}).
\end{remark}

We now state the main result of this Section.

\begin{theorem}
\label{EvQco} Let $(E,E^{\prime },\langle \cdot ,\cdot \rangle )$ be a dual
pairing introduced in Definition \ref{dualP}. If $\pi :E\rightarrow \bar{L}%
^{0}(\mathcal{G})$ is (REG) and (EQC) then 
\begin{equation}
\pi (x)=\sup_{x^{\prime} \in E^{\prime }}\mathcal{R}(\langle x,x^{\prime
}\rangle ,x^{\prime }),  \label{rapprPiSup}
\end{equation}%
where for $Y\in L^{0}(\mathcal{G})$ and $x^{\prime}$, 
\begin{equation}
\mathcal{R}(Y,x^{\prime }):=\inf_{\xi \in E}\left\{ \pi (\xi )\mid \langle
\xi ,x^{\prime }\rangle \geq Y\right\} .  \label{1212}
\end{equation}
\end{theorem}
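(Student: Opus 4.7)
My plan follows the classical Penot--Volle scalar proof, adapted via the conditional separation Theorem \ref{main}. The easy direction $\sup_{x'\in E'}\mathcal{R}(\langle x,x'\rangle,x')\le\pi(x)$ is immediate: on $\Upsilon_\pi$ both sides equal $+\infty$ (every feasible $\xi$ has $\pi(\xi)=+\infty$ there), while elsewhere the choice $\xi=x$ is feasible in the infimum defining $\mathcal{R}(\langle x,x'\rangle,x')$, forcing $\mathcal{R}(\langle x,x'\rangle,x')\le\pi(x)$ for every $x'$.

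For the reverse inequality I argue by contradiction: suppose $\bar{\pi}(x):=\sup_{x'}\mathcal{R}(\langle x,x'\rangle,x')<\pi(x)$ on some $B\in\mathcal{G}$ with $\mathbb{P}(B)>0$; then necessarily $B\subseteq T_\pi$ and $\bar{\pi}(x)$ is finite on $B$. I choose a $\mathcal{G}$-measurable $Y$ with $\bar{\pi}(x)<Y<\pi(x)$ on $B$ and, using the element $\zeta\in E$ from \eqref{888}, take $Y$ large enough on $B^c$ so that $\zeta$ belongs to $U_Y:=\{\xi\in E\mid\pi(\xi)\mathbf{1}_{T_\pi}\le Y\}$ and so that the triviality set $A_{U_Y}$ of Notation \ref{trivialcomponent} fills up $B^c\cap T_\pi$; in particular $U_Y\neq\varnothing$ and $D_{U_Y}\cap T_\pi=B$. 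By (EQC), $U_Y$ is conditionally evenly convex, and a short (REG)-argument shows that $x$ is outside $U_Y$: any $\eta\in U_Y$ agreeing with $x$ on $A\subseteq D_{U_Y}$ with $\mathbb{P}(A)>0$ would, by (REG), satisfy $\pi(x)\mathbf{1}_A=\pi(\eta)\mathbf{1}_A\le Y\mathbf{1}_A$, contradicting $A\subseteq B$.

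Theorem \ref{main} then provides $x'\in E'$ with $\langle\eta,x'\rangle<\langle x,x'\rangle$ on $D_{U_Y}$ for every $\eta\in U_Y$. Take any $\eta\in E$ satisfying $\langle\eta,x'\rangle\ge\langle x,x'\rangle$; if $\pi(\eta)<Y$ held on some $A\subseteq B$ of positive measure, the pasted element $\tilde{\eta}:=\eta\mathbf{1}_A+\zeta\mathbf{1}_{A^c}$ would, by (REG), satisfy $\pi(\tilde{\eta})\mathbf{1}_{T_\pi}\le Y$ and thus lie in $U_Y$, yet $\langle\tilde{\eta},x'\rangle=\langle\eta,x'\rangle\ge\langle x,x'\rangle$ on $A\subseteq D_{U_Y}$, contradicting the separation. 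Hence $\pi(\eta)\ge Y$ on $B$ for every feasible $\eta$, so $\mathcal{R}(\langle x,x'\rangle,x')\ge Y>\bar{\pi}(x)$ on $B$, the desired contradiction.

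The main obstacle I foresee is the calibration step: Theorem \ref{main}'s separator controls $x'$ only on $D_{U_Y}$, so the whole argument hinges on arranging $D_{U_Y}\cap T_\pi=B$ through a judicious choice of $Y$ on $B^c$, relying on (REG), (CSet) and the finite element $\zeta$ from \eqref{888}. This delicate localization is the conditional-setting subtlety absent from the classical scalar Penot--Volle proof and is where the countable-concatenation property enters nontrivially.
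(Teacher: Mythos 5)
Your easy inequality and your final ``claim'' (that every $\eta$ with $\langle\eta,x'\rangle\ge\langle x,x'\rangle$ must have $\pi(\eta)\ge Y$ on the separation set, via pasting with an element of the sublevel set) are correct and coincide with the paper's Step 3. The gap is precisely the calibration step you flag, and it is not a removable technicality: your choice of $Y$ cannot work. To invoke Theorem \ref{main} you need $x$ to be \emph{outside} $U_Y$, and by (REG) and (CSet) this forces $D_{U_Y}\subseteq\{\pi(x)>Y\}$: if $\pi(x)\le Y$ on some positive-measure $A\subseteq D_{U_Y}$, then pasting $x\mathbf{1}_{A}+\eta\mathbf{1}_{A^{C}}$ with any $\eta\in U_Y$ produces an element of $\mathbf{1}_{A}U_Y\cap\mathbf{1}_{A}\{x\}$. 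Equivalently you need $A_{U_Y}\supseteq\{\pi(x)\le Y\}\cap T_\pi$, in particular $A_{U_Y}\supseteq B^{C}\cap T_\pi$ where you took $Y$ ``large''. But the same pasting argument shows that $A_{U_Y}$ is the maximal set on which $\pi(x_0)\le Y$ holds for \emph{every} $x_0\in E$; whenever $\sup_{x_0\in E}\pi(x_0)=+\infty$ on part of $B^{C}\cap T_\pi$ (any coercive map, e.g. $\pi(\xi)=\Vert \xi|\mathcal{G}\Vert_p$ on $L^p_{\mathcal{G}}(\mathcal{F})$), no finite $Y$ achieves this. Then $D_{U_Y}$ strictly exceeds $B$, $x$ agrees with elements of $U_Y$ on the part of $D_{U_Y}$ where $Y\ge\pi(x)$, so $x$ is not outside $U_Y$ and Theorem \ref{main} gives nothing. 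The degenerate set of a sublevel set is governed by the whole of $E$, not by $x$ alone, so the separation cannot be localized to your bad set $B$.

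The paper circumvents this with a different decomposition: by Lemma \ref{L10} it splits $\Omega$ into the maximal set $A$ on which $\pi$ is \emph{constant} ($\pi(\xi)\mathbf{1}_{A}=\pi(\eta)\mathbf{1}_{A}$ for all $\xi,\eta$) and its complement $A^{\vdash}$; on $A$ the representation is trivial. It then takes the level $Y_\varepsilon$ equal to $\pi(x)-\varepsilon$ on $A^{\vdash}\cap\{\pi(x)<+\infty\}$, to $\varepsilon$ on $A^{\vdash}\cap\{\pi(x)=+\infty\}$, to $\pi(x)$ on $A\setminus\Upsilon_\pi$ and to $0$ on $\Upsilon_\pi$. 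With this choice every $\xi\in E$ satisfies the constraint on $A$ (by constancy), while $x$ itself violates it on every positive-measure subset of $A^{\vdash}$; hence $A_{\mathcal{C}_\varepsilon}=A$ exactly, $x$ is outside $\mathcal{C}_\varepsilon$, and the separation holds on all of $D_{\mathcal{C}_\varepsilon}=A^{\vdash}$ (the case $\mathcal{C}_\varepsilon=\varnothing$ for all $\varepsilon$ is handled separately). The price of demanding $Y_\varepsilon<\pi(x)$ on the whole non-constancy set, rather than your exact sandwich $\bar{\pi}(x)<Y<\pi(x)$ on $B$, is that one only obtains the bound $\pi(x)-\varepsilon$ and must let $\varepsilon$ shrink, which is exactly why the theorem asserts a supremum and not a maximum. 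To repair your proof, replace your $Y$ by the paper's $Y_\varepsilon$ and the direct contradiction by this $\varepsilon$-approximation.
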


\section{Conditional Evenly convexity in $L^{0}$- modules}

\label{module&set}

This section is inspired by the contribution given to the theory of $L^{0}$%
-modules by Filipovic et al. \cite{FKV09} on one hand and on the other to
the extended research provided by Guo from 1992 until today (see the
references in \cite{Guo}).

The following Proposition \ref{LLSC} shows that the definition of a
conditionally evenly convex set is the appropriate generalization, in the
context of topological $L^{0}$ module, of the notion of an evenly convex
subset of a topological vector space, as in both setting convex (resp. $%
L^{0} $-convex) sets that are either closed or open are evenly (resp.
conditionally evenly) convex. This is a key result that allows to show that
the assumption (EQC) is the weakest that allows to reach a dual
representation of the map $\pi$.

\bigskip

We will consider $L^{0}$, with the usual operations among random variables,
as a partially ordered ring and we will always assume in the sequel that $%
\tau _{0}$ is a topology on $L^{0}$ such that $(L^{0},\tau _{0})$ is a
topological ring. We do not require that $\tau _{0}$ is a linear topology on 
$L^{0}$ (so that $(L^{0},\tau _{0})$ may not be a topological vector space)
nor that $\tau _{0}$ is locally convex.

\begin{definition}[Topological $L^{0}$-module]
We say that $(E,\tau )$ is a topological $L^{0}$-module if $E$ is a $L^{0}$%
-module and $\tau $ is a topology on $E$ such that the module operation

(i) $(E,\tau)\times (E,\tau)\rightarrow (E,\tau)$, $(x_1,x_2) \mapsto
x_1+x_2 $,

(ii) $(L^0,\tau_0)\times (E,\tau)\rightarrow (E,\tau)$, $(\gamma,x_2)
\mapsto \gamma x_2$

\noindent are continuous w.r.t. the corresponding product topology.
\end{definition}

\begin{definition}[Duality for $L^{0}$-modules]
\label{duality} For a topological $L^{0}$-module $(E,\tau )$, we denote 
\begin{equation}
E^{\ast }:=\{x^{\ast }:(E,\tau )\rightarrow (L^{0},\tau _{0})\mid x^{\ast }%
\text{ is a continuous module homomorphism}\}.  \label{hom1}
\end{equation}%
It is easy to check that $(E,E^{\ast },\langle \cdot ,\cdot \rangle )$ is a
dual pair, where the pairing is given by $\langle x,x^{\ast }\rangle
=x^{\ast }(x).$ Every $x^{\ast }\in E^{\ast }$ is $L^{0}$-linear in the
following sense: for all $\,\alpha ,\beta \in L^{0}$ and $x_{1},x_{2}\in E$ 
\begin{equation*}
x^{\ast }(\alpha x_{1}+\beta x_{2})=\alpha x^{\ast }(x_{1})+\beta x^{\ast
}(x_{2}). 
\end{equation*}%
In particular, $x^{\ast }(x_{1}\mathbf{1}_{A}+x_{2}\mathbf{1}%
_{A^{C}})=x^{\ast }(x_{1})\mathbf{1}_{A}+x^{\ast }(x_{2})\mathbf{1}_{A^{C}}$.
\end{definition}

\begin{definition}
A map $\|\cdot\|:E\rightarrow L^0_+$ is a $L^0$-seminorm on $E$ if

\begin{enumerate}
\item[(i)] $\|\gamma x\|=|\gamma|\|x\|$ for all $\gamma\in L^0$ and $x\in E$,

\item[(ii)] $\|x_1+x_2\|\leq \|x_1\|+\|x_2\|$ for all $x_1,x_2\in E$.

The $L^{0}$-seminorm $\Vert \cdot \Vert $ becomes a $L^{0}$-norm if in
addition

\item[(iii)] $\Vert x\Vert =0$ implies $x=0$.
\end{enumerate}
\end{definition}

We will consider families of $L^{0}$-seminorms $\mathcal{Z}$ satisfying in
addition the property:%
\begin{equation}
\sup \{\Vert x\Vert \mid \Vert x\Vert \in \mathcal{Z}\}=0\text{ iff }x=0,
\label{mi}
\end{equation}%
As clearly pointed out in \cite{Guo}, one family $\mathcal{Z}$ of $L^{0}$%
-seminorms on $E$ may induce on $E$ more than one topology $\tau $ such that 
$\left\{ x_{\alpha }\right\} $ converges to $x$ in $(E,\tau )$ iff $\Vert
x_{\alpha }-x\Vert $ converges to $0$ in $(L^{0},\tau _{0})$ for each $\Vert
\cdot \Vert \in \mathcal{Z}$. Indeed, also the topology $\tau _{0}$\ on $%
L^{0}$ play a role in the convergence.

\begin{definition}[$L^{0}$-module associated to $\mathcal{Z}$]
\label{defez}We say that $(E,\mathcal{Z},\tau )$ is a $L^{0}$-module
associated to $\mathcal{Z}$ if:

\begin{enumerate}
\item $\mathcal{Z}$ is a family of $L^{0}$-seminorms satisfying (\ref{mi}),

\item $(E,\tau )$ is a topological $L^{0}$-module,

\item A net $\left\{ x_{\alpha }\right\} $ converges to $x$ in $(E,\tau )$
iff $\Vert x_{\alpha }-x\Vert $ converges to $0$ in $(L^{0},\tau _{0})$ for
each $\Vert \cdot \Vert \in \mathcal{Z}.$
\end{enumerate}
\end{definition}

Remark 2.2 in \cite{Guo} shows that any random locally convex module over $%
\mathbb{R}$ with base $(\Omega ,\mathcal{G},\mathbb{P}),$ according to
Definition 2.1 \cite{Guo}, is a $L^{0}$\textbf{-}module $(E,\mathcal{Z},\tau
)$ associated to a family $\mathcal{Z}$ of $L^{0}$-seminorms, according to
the previous definition.

Proposition \ref{LLSC} holds if the topological structure of $(E,\mathcal{Z}%
,\tau )$ allows for appropriate separation theorems. We now introduce two
assumptions that are tailor made for the statements in Proposition \ref{LLSC}%
, but in the following subsection we provide interesting and general
examples of $L^{0}$\textbf{-}module associated to $\mathcal{Z}$ that fulfill
these assumptions.

\begin{description}
\item[Separation Assumptions] Let $E$ be a topological $L^{0}$\textbf{-}%
module, let $E^{\ast }$ be defined in (\ref{hom1}) and let $\mathcal{C}%
_{0}\subseteq E$ be nonempty, $L^{0}$-convex and satisfy (CSet).

\item[S-Open] If $\mathcal{C}_{0}$ is also open and $\{x\}\mathbf{1}_{A}\cap 
\mathcal{C}_{0}\mathbf{1}_{A}=\emptyset $ for every $A\in \mathcal{G}$ s.t. $%
P(A)>0$, then there exists $x^{\ast }\in E^{\ast }$ s.t. $x^{\ast
}(x)>x^{\ast }(\xi )\quad \forall \,\xi \in \mathcal{C}_{0}.$

\item[S-Closed] If $\mathcal{C}_{0}$ is also closed and $\{x\}\mathbf{1}%
_{A}\cap \mathcal{C}_{0}\mathbf{1}_{A}=\emptyset $ for every $A\in \mathcal{G%
}$ s.t. $P(A)>0$, then there exists $x^{\ast }\in E^{\ast }$ s.t. $x^{\ast
}(x)>x^{\ast }(\xi )\quad \forall \,\xi \in \mathcal{C}_{0}.$
\end{description}

\begin{lemma}
\label{paste}.

\begin{enumerate}
\item Let $E$ be a topological $L^{0}$\textbf{-}module. If $\mathcal{C}%
_{i}\subseteq E$, $i=1,2,$ are open and non empty and $A\in \mathcal{G}$,
then the set $\mathcal{C}_{1}\mathbf{1}_{A}+\mathcal{C}_{2}\mathbf{1}%
_{A^{C}} $ is open.

\item Let $(E,\mathcal{Z},\tau )$ be $L^{0}$\textbf{-}module associated to $%
\mathcal{Z}$. Then for any net $\{\xi _{\alpha }\}\subseteq E$, $\xi \in E$, 
$\eta \in E$ and $A\in \mathcal{G}$ 
\begin{equation*}
\xi _{\alpha }\overset{\tau }{\rightarrow }\xi \Longrightarrow (\xi _{\alpha
}1_{A}+\eta 1_{A^{C}})\overset{\tau }{\rightarrow }(\xi 1_{A}+\eta
1_{A^{C}}). 
\end{equation*}
\end{enumerate}
\end{lemma}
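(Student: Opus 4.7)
My plan is to handle the two parts separately: part 1 is a topological argument that works in any $L^0$-module, and part 2 is essentially a one-line computation using the seminorm structure.

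For part 1, I fix $z = c_1\mathbf{1}_A + c_2\mathbf{1}_{A^C}$ with $c_i \in \mathcal{C}_i$ and look for a $\tau$-neighborhood $U$ of $z$ contained in $\mathcal{C}_1\mathbf{1}_A + \mathcal{C}_2\mathbf{1}_{A^C}$. The algebraic key is the identity
\[
w \;=\; (w\mathbf{1}_A + c_1\mathbf{1}_{A^C})\mathbf{1}_A \;+\; (c_2\mathbf{1}_A + w\mathbf{1}_{A^C})\mathbf{1}_{A^C}, \qquad w\in E,
\]
which expresses every $w$ as a concatenation of two auxiliary elements parametrized by $w$ alone. I then introduce the maps $\phi_1(w):=w\mathbf{1}_A+c_1\mathbf{1}_{A^C}$ and $\phi_2(w):=c_2\mathbf{1}_A+w\mathbf{1}_{A^C}$; each is $\tau$-continuous since multiplication by the fixed scalar $\mathbf{1}_A$ (resp.\ $\mathbf{1}_{A^C}$) is continuous on $(E,\tau)$ and addition is continuous. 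Because $\phi_i(z)=c_i$ and $\mathcal{C}_i$ is open, continuity of $\phi_i$ produces an open neighborhood $U_i$ of $z$ with $\phi_i(U_i)\subseteq\mathcal{C}_i$. The intersection $U:=U_1\cap U_2$ is then a neighborhood of $z$ on which the identity displays each $w\in U$ as $\phi_1(w)\mathbf{1}_A+\phi_2(w)\mathbf{1}_{A^C}\in \mathcal{C}_1\mathbf{1}_A+\mathcal{C}_2\mathbf{1}_{A^C}$.

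For part 2, I use the characterization of $\tau$-convergence in $(E,\mathcal{Z},\tau)$: it suffices to verify that for every $\|\cdot\|\in\mathcal{Z}$,
\[
\bigl\|(\xi_\alpha\mathbf{1}_A+\eta\mathbf{1}_{A^C})-(\xi\mathbf{1}_A+\eta\mathbf{1}_{A^C})\bigr\| \;=\; \|(\xi_\alpha-\xi)\mathbf{1}_A\| \;=\; \mathbf{1}_A\,\|\xi_\alpha-\xi\|
\]
converges to $0$ in $(L^0,\tau_0)$, where the second equality is the seminorm property $\|\gamma x\|=|\gamma|\|x\|$. Since $\|\xi_\alpha-\xi\|\to 0$ in $\tau_0$ by hypothesis and $(L^0,\tau_0)$ is a topological ring, multiplication by the fixed element $\mathbf{1}_A$ preserves this convergence, concluding the argument.

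The only genuine subtlety I anticipate lies in part 1, where one is tempted to argue that $\mathcal{C}_1\mathbf{1}_A+\mathcal{C}_2\mathbf{1}_{A^C}$ is the image of the open product $\mathcal{C}_1\times\mathcal{C}_2$ under the jointly continuous map $(x,y)\mapsto x\mathbf{1}_A+y\mathbf{1}_{A^C}$, and to conclude openness from there. That route fails because continuous maps need not send open sets to open sets. The single-variable parametrization by $\phi_1,\phi_2$ circumvents this, and importantly relies only on the topological $L^0$-module structure, not on the seminorm family $\mathcal{Z}$ (which is instead essential for part 2).
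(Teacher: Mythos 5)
Your proof is correct, and part 1 follows a genuinely different route from the paper's. The paper argues directly with a neighborhood of zero: it picks a single $U_{0}$ with $x_{i}+U_{0}\subseteq \mathcal{C}_{i}$ for $i=1,2$ and observes that $x+U_{0}\mathbf{1}_{A}+U_{0}\mathbf{1}_{A^{C}}$ contains $x+U_{0}$ (since any $u\in U_{0}$ equals $u\mathbf{1}_{A}+u\mathbf{1}_{A^{C}}$), so the concatenated set contains a neighborhood of $x$. You instead parametrize the decomposition by the single variable $w$ via the continuous slice maps $\phi_{1},\phi_{2}$ and intersect the two preimages $\phi_{i}^{-1}(\mathcal{C}_{i})$; this avoids any appeal to translation-invariance of the topology and only uses separate continuity of the module operations, and your explicit warning that one cannot conclude openness from the continuous image of $\mathcal{C}_{1}\times\mathcal{C}_{2}$ is well taken. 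For part 2 you perform the same seminorm computation $\Vert \mathbf{1}_{A}(\xi_{\alpha}-\xi)\Vert=\mathbf{1}_{A}\Vert\xi_{\alpha}-\xi\Vert$ as the paper, but where the paper concludes from the domination $\mathbf{1}_{A}\Vert\xi_{\alpha}-\xi\Vert\leq\Vert\xi_{\alpha}-\xi\Vert$ (which implicitly requires $\tau_{0}$-convergence to $0$ to be stable under such domination, true for the two topologies actually used but not for an arbitrary ring topology), you invoke continuity of multiplication by the fixed element $\mathbf{1}_{A}$ in the topological ring $(L^{0},\tau_{0})$; this is the cleaner and more generally valid justification of the final step.
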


\begin{proof}
1. To show this claim let $x:=x_{1}\mathbf{1}_{A}+x_{2}\mathbf{1}_{A^{C}}$
with $x_{i}\in \mathcal{C}_{i}$ and let $U_{0}$ be a neighborhood of $0$
satisfying $x_{i}+U_{0}\subseteq \mathcal{C}_{i}$. Then the set $%
U:=(x_{1}+U_{0})\mathbf{1}_{A}+(x_{2}+U_{0})\mathbf{1}_{A^{C}}=x+U_{0}%
\mathbf{1}_{A}+U_{0}\mathbf{1}_{A^{C}}$ is contained in $\mathcal{C}_{1}%
\mathbf{1}_{A}+\mathcal{C}_{2}\mathbf{1}_{A^{C}}$ and it is a neighborhood
of $x$, since $U_{0}\mathbf{1}_{A}+U_{0}\mathbf{1}_{A^{C}}$ contains $U_{0}$
and is therefore a neighborhood of $0$.

2. Observe that a seminorm satisfies $\Vert 1_{A}(\xi _{\alpha }-\xi )\Vert
=1_{A}\Vert \xi _{\alpha }-\xi \Vert \leq \Vert \xi _{\alpha }-\xi \Vert $
and therefore, by condition 3. in Definition \ref{defez} the claim follows.
In particular, $\xi _{\alpha }\overset{\tau }{\rightarrow }\xi
\Longrightarrow (\xi _{\alpha }1_{A})\overset{\tau }{\rightarrow }(\xi
1_{A}) $.
\end{proof}

\begin{proposition}
\label{LLSC}Let $(E,\mathcal{Z},\tau )$ be $L^{0}$\textbf{-}module
associated to $\mathcal{Z}$ and suppose that $\mathcal{C}\subseteq E$
satisfies (CSet).

\begin{enumerate}
\item Suppose that the strictly positive cone $L_{++}^{0}$ is $\tau _{0}$%
-open and that there exist $x_{0}^{\prime }\in E^{\ast }$ and $x_{0}\in E$
such that $x_{0}^{\prime }(x_{0})>0.$ Under Assumption S-Open, if $\mathcal{C%
}$ is open and $L^{0}$-convex then $\mathcal{C}$ is conditionally evenly
convex.

\item Under Assumption S-Closed, if $\mathcal{C}$ is closed and $L^{0}$%
-convex then it is conditionally evenly convex.
\end{enumerate}
\end{proposition}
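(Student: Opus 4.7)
The approach is to apply Theorem \ref{main}: since $\mathcal{C}$ satisfies (CSet) by hypothesis, it suffices, for each $x$ outside $\mathcal{C}$, to exhibit some $x^{\ast}\in E^{\ast}$ with $\langle \xi,x^{\ast}\rangle <\langle x,x^{\ast}\rangle$ on $D_{\mathcal{C}}$ for every $\xi \in \mathcal{C}$. The main technical difficulty is that Assumption S-Open (resp.\ S-Closed) demands $\{x\}\mathbf{1}_{A}\cap \mathcal{C}_{0}\mathbf{1}_{A}=\emptyset$ for \emph{every} $A\in \mathcal{G}$ with $P(A)>0$, while the ``outside'' hypothesis only provides this for $A\subseteq D_{\mathcal{C}}$; the trivial component $A_{\mathcal{C}}$ of $\mathcal{C}$ obstructs a direct application.

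For part 1, the extra hypotheses ($L_{++}^{0}$ is $\tau_{0}$-open and there exist $x_{0}^{\prime}\in E^{\ast}$, $x_{0}\in E$ with $x_{0}^{\prime}(x_{0})>0$) permit the construction of the auxiliary open $L^{0}$-convex half-space
\[
H := \{\eta \in E : x_{0}^{\prime}(\eta) < x_{0}^{\prime}(x)\}.
\]
Its openness follows from continuity of $x_{0}^{\prime}$ and $\tau_{0}$-openness of $-L_{++}^{0}$; $L^{0}$-convexity and (CSet) are routine, and $x-x_{0}\in H$ gives nonemptiness. The module-homomorphism property of $x_{0}^{\prime}$ forces $\{x\}\mathbf{1}_{A}\cap H\mathbf{1}_{A}=\emptyset$ for every positive-measure $A$: $\eta \in H$ and $\eta \mathbf{1}_{A}=x\mathbf{1}_{A}$ would yield $x_{0}^{\prime}(x)\mathbf{1}_{A}=x_{0}^{\prime}(\eta)\mathbf{1}_{A}$, contradicting $x_{0}^{\prime}(\eta)<x_{0}^{\prime}(x)$. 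The set $\tilde{\mathcal{C}}:=\mathcal{C}\cap H$ then inherits openness, $L^{0}$-convexity, (CSet), and global disjointness from $x$; after arranging nonemptiness (by perturbing some $\xi_{0}\in \mathcal{C}$ in the $-x_{0}$ direction via openness of $\mathcal{C}$), Assumption S-Open furnishes $x^{\ast}\in E^{\ast}$ with $x^{\ast}(x)>x^{\ast}(\xi)$ for every $\xi \in \tilde{\mathcal{C}}$. The main obstacle is then extending this separation from $\tilde{\mathcal{C}}$ to all of $\mathcal{C}$ on $D_{\mathcal{C}}$: for each $\xi \in \mathcal{C}$, the concatenation $\zeta := \xi \mathbf{1}_{D_{\mathcal{C}}}+\eta_{0}\mathbf{1}_{A_{\mathcal{C}}}$ with $\eta_{0}\in \tilde{\mathcal{C}}$ lies in $\mathcal{C}$ by (CSet), and the module identity $x^{\ast}(\zeta)\mathbf{1}_{D_{\mathcal{C}}}=x^{\ast}(\xi)\mathbf{1}_{D_{\mathcal{C}}}$ transfers the strict inequality on the portion of $D_{\mathcal{C}}$ where $\zeta \in \tilde{\mathcal{C}}$; on the remaining portion (where $x_{0}^{\prime}(\xi)\not< x_{0}^{\prime}(x)$) I would run an exhaustion, building the maximal $\bar{A}\subseteq D_{\mathcal{C}}$ on which separation has been established, pasting local functionals via (CSet) of $E^{\ast}$ into a single $\sum_{n}\mathbf{1}_{A_{n}}x^{\ast}_{n}\in E^{\ast}$, and contradicting maximality by reapplying the construction with a different auxiliary functional on the remainder.

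For part 2 the same framework runs with S-Closed in place of S-Open and closed half-spaces $\{\eta:x_{0}^{\prime}(\eta)\leq c\}$ in place of $H$, the strict inequality being recovered at the end via the closedness of $\mathcal{C}$. Alternatively, one may bypass the auxiliary half-space entirely and argue by pure exhaustion: the collection $\mathcal{E}:=\{A\in \mathcal{G}\mid A\subseteq D_{\mathcal{C}},\ \exists\,x^{\ast}_{A}\in E^{\ast}\text{ separating }\mathcal{C}\text{ from }x\text{ on }A\}$ is closed under countable disjoint unions via (CSet) of $E^{\ast}$ (the pasted functional $\sum_{n}\mathbf{1}_{A_{n}}x^{\ast}_{A_{n}}$ works on $\bigcup_{n}A_{n}$) and admits a maximal element $\bar{A}$ by essential supremum; if $P(D_{\mathcal{C}}\setminus \bar{A})>0$, one constructs a closed $L^{0}$-convex modification of $\mathcal{C}$ to which S-Closed applies on a positive-measure subset of $D_{\mathcal{C}}\setminus \bar{A}$, contradicting maximality. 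In both parts the (CSet) property of $E^{\ast}$ is the key tool for assembling local separating functionals into the single global $x^{\ast}\in E^{\ast}$ demanded by Theorem \ref{main}.
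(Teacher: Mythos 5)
There is a genuine gap. You correctly identify the obstruction (the trivial component $A_{\mathcal{C}}$ prevents a direct appeal to S-Open/S-Closed, since those require $\{x\}\mathbf{1}_{A}\cap \mathcal{C}_{0}\mathbf{1}_{A}=\emptyset$ for \emph{every} positive-measure $A$), but your fix --- intersecting $\mathcal{C}$ with the half-space $H=\{\eta \mid x_{0}^{\prime }(\eta )<x_{0}^{\prime }(x)\}$ --- modifies $\mathcal{C}$ on $D_{\mathcal{C}}$ as well, which is exactly where the separation must hold for \emph{all} of $\mathcal{C}$. S-Open applied to $\tilde{\mathcal{C}}=\mathcal{C}\cap H$ yields $x^{\ast }$ with $x^{\ast }(x)>x^{\ast }(\xi )$ only for those $\xi \in \mathcal{C}$ with $x_{0}^{\prime }(\xi )<x_{0}^{\prime }(x)$; for a $\xi \in \mathcal{C}$ with $x_{0}^{\prime }(\xi )\geq x_{0}^{\prime }(x)$ on a positive-measure subset of $D_{\mathcal{C}}$ you get nothing, and your exhaustion does not close this hole: the step ``contradict maximality by reapplying the construction with a different auxiliary functional on the remainder'' is precisely the assertion that a separating functional exists on the remainder, which is what has to be proved --- the half-space device supplies no mechanism to produce one there. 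Note also that the pasted functional $\sum_{n}\mathbf{1}_{A_{n}}x_{A_{n}}^{\ast }$ must work simultaneously for all $\xi \in \mathcal{C}$ on all of $D_{\mathcal{C}}$, whereas your partition of $D_{\mathcal{C}}$ into ``where $x_{0}^{\prime }(\xi )<x_{0}^{\prime }(x)$'' and its complement depends on $\xi $. A second, independent problem: in part 2 you again invoke $x_{0}^{\prime }$ and the half-spaces $\{\eta \mid x_{0}^{\prime }(\eta )\leq c\}$, but the openness of $L_{++}^{0}$ and the existence of $x_{0}^{\prime }$ with $x_{0}^{\prime }(x_{0})>0$ are hypothesized only in part 1.

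The repair is to modify $\mathcal{C}$ \emph{only} on $A_{\mathcal{C}}$, where there is nothing to separate anyway, and leave it untouched on $D_{\mathcal{C}}$. In the open case take $\mathcal{C}_{0}=\mathcal{C}\mathbf{1}_{D_{\mathcal{C}}}+\mathcal{E}\mathbf{1}_{A_{\mathcal{C}}}$ with $\mathcal{E}=(x_{0}^{\prime })^{-1}(x_{0}^{\prime }(x)+L_{++}^{0})$: this $\mathcal{E}$ is open (by $\tau _{0}$-openness of $L_{++}^{0}$), nonempty (it contains $x+x_{0}$), and globally disjoint from $x$, and $\mathcal{C}_{0}$ is open by Lemma \ref{paste}. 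In the closed case take $\mathcal{C}_{0}=\mathcal{C}\mathbf{1}_{D_{\mathcal{C}}}+\{x+\varepsilon \}\mathbf{1}_{A_{\mathcal{C}}}$ with $\varepsilon \in L_{++}^{0}$, whose closedness is checked by a net argument via Lemma \ref{paste}; no auxiliary functional is needed. In either case $\mathcal{C}_{0}$ is $L^{0}$-convex and satisfies $\{x\}\mathbf{1}_{A}\cap \mathcal{C}_{0}\mathbf{1}_{A}=\emptyset $ for every positive-measure $A$ (on $A\subseteq D_{\mathcal{C}}$ because $x$ is outside $\mathcal{C}$, on $A\subseteq A_{\mathcal{C}}$ by construction), so a single application of S-Open (resp.\ S-Closed) gives $x^{\ast }$ with $x^{\ast }(x)>x^{\ast }(\xi )$ for all $\xi \in \mathcal{C}_{0}$. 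Since $\xi \mathbf{1}_{D_{\mathcal{C}}}+\eta \mathbf{1}_{A_{\mathcal{C}}}\in \mathcal{C}_{0}$ for every $\xi \in \mathcal{C}$ and fixed $\eta$ in the pasted piece, the module property of $x^{\ast }$ yields $x^{\ast }(x)>x^{\ast }(\xi )$ on $D_{\mathcal{C}}$ for all $\xi \in \mathcal{C}$, and Theorem \ref{main} concludes; no exhaustion is required.
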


\begin{proof}
1. Let $\mathcal{C}\subseteq E$ be open, $L^{0}$-convex, $\mathcal{C}\neq
\varnothing $ and let $A_{\mathcal{C}}\in \mathcal{G}$ be the maximal set
given in the Notation \ref{trivialcomponent}, being $D_{\mathcal{C}}$ its
complement. Suppose that $x$ is outside $\mathcal{C}$, i.e. $x\in E$
satisfies $\{x\}\mathbf{1}_{A}\cap \mathcal{C}\mathbf{1}_{A}=\varnothing $
for every $A\in \mathcal{G}$, $A\subseteq D_{\mathcal{C}}$, $P(A)>0$. Define
the $L^{0}$-convex set 
\begin{equation*}
\mathcal{E}:=\{\xi \in E\mid x_{0}^{\prime }(\xi )>x_{0}^{\prime
}(x)\}=(x_{0}^{\prime })^{-1}(x_{0}^{\prime }(x)+L_{++}^{0})
\end{equation*}
and notice that $\{x\}\mathbf{1}_{A}\cap \mathcal{E}\mathbf{1}_{A}=\emptyset 
$ for every $A\in \mathcal{G}$. As $L_{++}^{0}$ is $\tau _{0}$-open, $%
\mathcal{E}$ is open in $E.$ As $x_{0}^{\prime }(x_{0})>0,$ then $%
(x+x_{0})\in \mathcal{E}$ and $\mathcal{E}$ is non-empty.

Then the set $\mathcal{C}_{0}=\mathcal{C}\mathbf{1}_{D_{\mathcal{C}}}+%
\mathcal{E}\mathbf{1}_{A_{\mathcal{C}}}$ is $L^{0}$-convex, open (by Lemma %
\ref{paste}) and satisfies $\{x\}\mathbf{1}_{A}\cap \mathcal{C}_{0}\mathbf{1}%
_{A}=\emptyset $ for every $A\in \mathcal{G}$ s.t. $P(A)>0$. Assumption
S-Open guarantees the existence of $x^{\ast }\in E^{\ast }$ s.t. $x^{\ast
}(x)>x^{\ast }(\xi )\quad \forall \,\xi \in \mathcal{C}_{0},$ which implies $%
x^{\ast }(x)>x^{\ast }(\xi )\quad $on $D_{\mathcal{C}}$, $\forall \,\xi \in 
\mathcal{C}$. Hence, by Theorem \ref{main}, $\mathcal{C}$ is conditionally
evenly convex.

2. Let $\mathcal{C}\subset E$ be closed, $L^{0}$-convex, $\mathcal{C}\neq
\varnothing $ and suppose that $x\in E$ satisfies $\{x\}\mathbf{1}_{A}\cap 
\mathcal{C}\mathbf{1}_{A}=\varnothing $ for every $A\in \mathcal{G}$, $%
A\subseteq D_{\mathcal{C}}$, $\mathbb{P}(A)>0$. Let $\mathcal{C}_{0}=%
\mathcal{C}\mathbf{1}_{D_{\mathcal{C}}}+\{x+\varepsilon \}\mathbf{1}_{A_{%
\mathcal{C}}}$ where $\varepsilon \in L_{++}^{0}$. Clearly $\mathcal{C}_{0}$
is $L^{0}$-convex. In order to prove that $\mathcal{C}_{0}$ is closed
consider any net $\xi _{\alpha }\overset{\tau }{\rightarrow }\xi $, $\{\xi
_{\alpha }\}\subset \mathcal{C}_{0}$. Then $\xi _{\alpha }=Z_{\alpha }%
\mathbf{1}_{D_{\mathcal{C}}}+\{x+\varepsilon \}\mathbf{1}_{A_{\mathcal{C}}}$%
, with $Z_{\alpha }\in \mathcal{C}$, and $(x+\varepsilon )\mathbf{1}_{A_{%
\mathcal{C}}}=\xi 1_{A_{\mathcal{C}}}.$ Take any $\eta \in \mathcal{C}$. As $%
\mathcal{C}$ is $L^{0}$-convex, $\xi _{\alpha }\mathbf{1}_{D_{\mathcal{C}%
}}+\eta \mathbf{1}_{A_{\mathcal{C}}}=Z_{\alpha }\mathbf{1}_{D_{\mathcal{C}%
}}+\eta \mathbf{1}_{A_{\mathcal{C}}}\in \mathcal{C}$ and, by Lemma \ref%
{paste}, $\xi _{\alpha }\mathbf{1}_{D_{\mathcal{C}}}+\eta \mathbf{1}_{A_{%
\mathcal{C}}}\overset{\tau }{\rightarrow }\xi \mathbf{1}_{D_{\mathcal{C}%
}}+\eta \mathbf{1}_{A_{\mathcal{C}}}:=Z\in \mathcal{C}$, as $\mathcal{C}$ is
closed. Therefore, $\xi =Z\mathbf{1}_{D_{\mathcal{C}}}+\{x+\varepsilon \}%
\mathbf{1}_{A_{\mathcal{C}}}\in \mathcal{C}_{0}$. Since $\mathcal{C}_{0}$ is
closed, $L^{0}$-convex and $\{x\}\mathbf{1}_{A}\cap \mathcal{C}_{0}\mathbf{1}%
_{A}=\emptyset $ for every $A\in \mathcal{G}$, assumption S-Closed
guarantees the existence of $x^{\ast }\in E^{\ast }$ s.t. $x^{\ast
}(x)>x^{\ast }(\xi )\quad \forall \,\xi \in \mathcal{C}_{0},$ which implies $%
x^{\ast }(x)>x^{\ast }(\xi )\quad $on $D_{\mathcal{C}}$, $\forall \,\xi \in 
\mathcal{C}$. Hence, by Theorem \ref{main}, $\mathcal{C}$ is conditionally
evenly convex.
\end{proof}

\begin{proposition}
Let $(E,\mathcal{Z},\tau )$ and $E^{\ast }$ be respectively as in
definitions \ref{duality} and \ref{defez}, and let $\tau _{0}$ be a topology
on $L^{0}$ such that the positive cone $L_{+}^{0}$ is closed. Then any
conditionally evenly convex $L^{0}$-cone containing the origin is closed.
\end{proposition}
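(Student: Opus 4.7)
The plan is to realize $\mathcal{C}$ as an intersection of preimages of a closed set under continuous maps, and thereby conclude closedness.

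First I would invoke the bipolar machinery. Since every $x^{\ast}\in E^{\ast}$ is $L^{0}$-linear in its argument (cf.\ Definition \ref{duality}), the pairing $\langle x,x^{\ast}\rangle:=x^{\ast}(x)$ satisfies the hypothesis of Theorem \ref{bipolar}. Because $0\in\mathcal{C}$ and $\mathcal{C}$ is conditionally evenly convex, part (3) of that theorem gives $\mathcal{C}=\mathcal{C}^{\circ\circ}$. Moreover, $\mathcal{C}$ being an $L^{0}$-cone, the simplified formula (\ref{222}) applies:
\begin{equation*}
\mathcal{C}=\bigcap_{x^{\ast}\in\mathcal{C}^{\circ}}\bigl\{x\in E\mid \langle x,x^{\ast}\rangle\leq 0\text{ on }D_{\mathcal{C}}\bigr\}.
\end{equation*}
Hence it suffices to prove that for each fixed $x^{\ast}\in E^{\ast}$ the set $H_{x^{\ast}}:=\{x\in E\mid \mathbf{1}_{D_{\mathcal{C}}}\langle x,x^{\ast}\rangle\leq 0\}$ is $\tau$-closed, and then appeal to the fact that an arbitrary intersection of closed sets is closed.

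The next step is to express $H_{x^{\ast}}$ as a preimage. Consider the map $\Phi:E\to L^{0}$ defined by $\Phi(x):=\mathbf{1}_{D_{\mathcal{C}}}x^{\ast}(x)$. By Definition \ref{duality}, $x^{\ast}:(E,\tau)\to(L^{0},\tau_{0})$ is continuous, and since $(L^{0},\tau_{0})$ is a topological ring, multiplication by the fixed element $\mathbf{1}_{D_{\mathcal{C}}}\in L^{0}$ is continuous on $L^{0}$. Thus $\Phi$ is $\tau$-$\tau_{0}$ continuous. By hypothesis $L_{+}^{0}$ is $\tau_{0}$-closed, and since $y\mapsto -y$ is a $\tau_{0}$-homeomorphism of $L^{0}$ (the ring operations being continuous), $-L_{+}^{0}$ is closed as well. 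Therefore
\begin{equation*}
H_{x^{\ast}}=\Phi^{-1}(-L_{+}^{0})
\end{equation*}
is $\tau$-closed, and the representation above yields $\mathcal{C}=\bigcap_{x^{\ast}\in\mathcal{C}^{\circ}}H_{x^{\ast}}$ closed.

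The only delicate point is ensuring that the inequality $\langle x,x^{\ast}\rangle\leq 0$ \emph{on} $D_{\mathcal{C}}$ really translates to membership in a closed set; this is handled by the multiplication-by-$\mathbf{1}_{D_{\mathcal{C}}}$ trick above, which works precisely because $(L^{0},\tau_{0})$ is assumed to be a topological ring rather than merely a topological group. Everything else is routine application of Theorem \ref{bipolar} and continuity arguments.
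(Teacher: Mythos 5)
Your proof is correct and follows essentially the same route as the paper: both reduce, via Theorem \ref{bipolar} and the cone formula (\ref{222}), to showing that each set $\{x\in E\mid \langle x,x^{\ast}\rangle\leq 0 \text{ on }D_{\mathcal{C}}\}$ is closed, using continuity of $x^{\ast}$ and closedness of $L_{+}^{0}$. The only (cosmetic) difference is that you phrase the last step as a preimage of the closed set $-L_{+}^{0}$ under a continuous map, multiplying by $\mathbf{1}_{D_{\mathcal{C}}}$ in $L^{0}$, whereas the paper runs a net argument and multiplies by $\mathbf{1}_{D_{\mathcal{C}}}$ in $E$ before applying $x^{\ast}$; by $L^{0}$-linearity these are the same.
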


\begin{proof}
From (\ref{222}) and the bipolar Theorem \ref{bipolar} we know that 
\begin{equation*}
\mathcal{C}=\mathcal{C}^{\circ \circ }\,=\bigcap_{x^{\prime }\in \mathcal{C}%
^{\circ }}\left\{ x\in E\mid \langle x,x^{\prime }\rangle \leq 0\text{ on }%
D_{\mathcal{C}}\right\} .
\end{equation*}%
We only need to prove that $\mathcal{S}_{x^{\prime }}=\left\{ x\in E\mid
\langle x,x^{\prime }\rangle \leq 0\text{ on }D_{\mathcal{C}}\right\} $ is
closed for any $x^{\prime }\in \mathcal{C}^{\circ }$. Let $x_{\alpha }\in 
\mathcal{S}_{x^{\prime }}$ be a net such that $x_{\alpha }\overset{\tau }{%
\rightarrow }x$. Since $x^{\prime }\in E^{\ast }$ is continuous we have $%
Y_{\alpha }=:\langle x_{\alpha },x^{\prime }\rangle \overset{\tau _{0}}{%
\rightarrow }Y=:\langle x,x^{\prime }\rangle $, with $Y_{\alpha }\leq 0$ on $%
D_{\mathcal{C}}$. We surely have that $x_{\alpha }\mathbf{1}_{D_{\mathcal{C}%
}}\overset{\tau }{\rightarrow }x\mathbf{1}_{D_{\mathcal{C}}}$ which implies
that $Y_{\alpha }\mathbf{1}_{D_{\mathcal{C}}}\overset{\tau _{0}}{\rightarrow 
}Y\mathbf{1}_{D_{\mathcal{C}}}$. Since $-Y_{\alpha }\mathbf{1}_{D_{\mathcal{C%
}}}\in L_{+}^{0}$ for every $\alpha $ and $L_{+}^{0}$ is closed we conclude
that $Y=\langle x,x^{\prime }\rangle \leq 0$ on $D_{\mathcal{C}}$.
\end{proof}

\subsection{On $L^{0}$\textbf{-}module associated to $\mathcal{Z}$
satisfying S-Open and S-Closed}

Based on the results of Guo \cite{Guo} and Filipovic et al.\cite{FKV09}, we
show that a family of seminorms on $E$ may induce more than one topology on
the $L^{0}$-module $E$ and that these topologies satisfy the assumptions
S-Open and S-Closed.

These examples are quite general and therefore supports the claim made in
the previous section about the relevance of conditional evenly convex sets.
A concrete and significant example, already introduced in Section 2, is
provided next. To help the reader in finding further details we use the same
notations and definitions given in \cite{FKV09} and \cite{Guo}.

\begin{example}[\protect\cite{FKV10}]
\label{exmod}Let $\mathcal{F}$ be a sigma algebra containing in $\mathcal{G}$
and consider the generalized conditional expectation of $\mathcal{F}$%
-measurable non negative random variables: $E[\cdot |\mathcal{G}%
]:L_{+}^{0}(\Omega ,\mathcal{F},\mathbb{P})\rightarrow \bar{L}_{+}^{0}:=\bar{%
L}_{+}^{0}(\Omega ,\mathcal{G},\mathbb{P})$ 
\begin{equation*}
E[x|\mathcal{G}]=:\lim_{n\rightarrow +\infty }E[x\wedge n|\mathcal{G}]. 
\end{equation*}%
Let $p\in \lbrack 1,\infty ]$ and consider the $L^{0}$-module defined as 
\begin{equation*}
L_{\mathcal{G}}^{p}(\mathcal{F})=:\{x\in L^{0}(\Omega ,\mathcal{F},\mathbb{P}%
)\mid \Vert x|\mathcal{G}\Vert _{p}\in L^{0}(\Omega ,\mathcal{G},\mathbb{P}%
)\} 
\end{equation*}%
where $\Vert \cdot |\mathcal{G}\Vert _{p}$ is the $L^{0}$-norm assigned by 
\begin{equation}
\Vert x|\mathcal{G}\Vert _{p}=:\left\{ 
\begin{array}{cc}
E[|x|^{p}|\mathcal{G}]^{\frac{1}{p}} & \text{ if }p<+\infty \\ 
\inf \{y\in \bar{L}^{0}(\mathcal{G})\mid y\geq |x|\} & \text{ if }p=+\infty%
\end{array}%
\right.  \label{norm}
\end{equation}%
Then $L_{\mathcal{G}}^{p}(\mathcal{F})$ becomes a $L^{0}$-normed module
associated to the norm $\Vert \cdot |\mathcal{G}\Vert _{p}$ having the
product structure: 
\begin{equation*}
L_{\mathcal{G}}^{p}(\mathcal{F})=L^{0}(\mathcal{G})L^{p}(\mathcal{F}%
)=\{yx\mid y\in L^{0}(\mathcal{G}),\;x\in L^{p}(\mathcal{F})\}. 
\end{equation*}%
For $p<\infty $, any $L^{0}$-linear continuous functional $\mu :L_{\mathcal{G%
}}^{p}(\mathcal{F})\rightarrow L^{0}$ can be identified with a random
variable $z\in L_{\mathcal{G}}^{q}(\mathcal{F})$ as $\mu (\cdot )=E[z\cdot |%
\mathcal{G}]$ where $\frac{1}{p}+\frac{1}{q}=1$. So we can identify $E^{\ast
}$ with $L_{\mathcal{G}}^{q}(\mathcal{F})$.
\end{example}

The two different topologies on $E$ depend on which topology is selected on $%
L^{0}$: either the uniform topology or the topology of convergence in
probability. \newline
The two topologies on $E$ will collapse to the same one whenever $\mathcal{G}%
=\sigma (\varnothing )$ is the trivial sigma algebra, but in general present
different structural properties.

We set:%
\begin{equation*}
\Vert x\Vert _{\mathcal{S}}:=\sup \{\Vert x\Vert \mid \Vert x\Vert \in 
\mathcal{S}\} 
\end{equation*}%
for any finite subfamily $\mathcal{S}\subset \mathcal{Z}$ \ of $L^{0}$%
-seminorms. Recall from the assumption given in equation (\ref{mi}) that $%
\Vert x\Vert _{\mathcal{S}}=0$ if and only if $x=0$.

\paragraph{The uniform topology $\protect\tau _{c}$ \protect\cite{FKV09}.}

In this case, $L^{0}$ is equipped with the following uniform topology. For
every $\varepsilon \in L_{++}^{0},$ the ball $B_{\varepsilon }:=\{Y\in
L^{0}\mid |Y|\leq \varepsilon \}$ centered in $0\in L^{0}$ gives the
neighborhood basis of $0$. A set $V\subset L^{0}$ is a neighborhood of $Y\in
L^{0}$ if there exists $\varepsilon \in L_{++}^{0}$ such that $%
Y+B_{\varepsilon }\subset V$. A set $V$ is open if it is a neighborhood of
all $Y\in V$. A net converges in this topology, namely $Y_{N}\overset{|\cdot
|}{\rightarrow }Y$ if for every $\varepsilon \in L_{++}^{0}$ there exists $%
\overline{N}$ such that $|Y-Y_{N}|<\varepsilon $ for every $N>\overline{N}$.
In this case the space $(L^{0},|\cdot |)$ looses the property of being a
topological vector space. In this topology the positive cone $L_{+}^{0}$ is
closed and the strictly positive cone $L_{++}^{0}$ is open. 

Under the assumptions that there exists an $x\in E$ such that $x\mathbf{1}%
_{A}\neq 0$ for every $A\in \mathcal{G}$ and that the topology $\tau $ on $E$
is Hausdorff, Theorem 2.8 in \cite{FKV09} guarantees the existence of $%
x_{0}\in E$ and $x_{0}^{\prime }\in E^{\ast }$ such that $x_{0}^{\prime
}(x_{0})>0$. This and the next item 2 allow the application of Proposition  %
\ref{LLSC}.

\bigskip

A family $\mathcal{Z}$ of $L^{0}$-seminorms on $E$ induces a topology on $E$
in the following way. For any finite $\mathcal{S}\subset \mathcal{Z}$ and $%
\varepsilon \in L_{++}^{0}$ define 
\begin{eqnarray*}
&&U_{\mathcal{S},\varepsilon }:=\left\{ x\in E\mid \Vert x\Vert _{\mathcal{S}%
}\leq \varepsilon \right\} \\
&&\mathcal{U}:=\{U_{\mathcal{S},\varepsilon }\mid \mathcal{S}\subset 
\mathcal{Z}\text{ finite and }\varepsilon \in L_{++}^{0}\}.
\end{eqnarray*}%
$\mathcal{U}$ gives a convex neighborhood base of $0$ and it induces a
topology on $E$ denoted by $\tau _{c}$. We have the following properties:

\begin{enumerate}
\item $(E,\mathcal{Z},\tau _{c})$ is a $(L^{0},|\cdot |)$-module associated
to $\mathcal{Z}$, which is also a locally convex topological $L^{0}$-module
(see Proposition 2.7 \cite{Guo}),

\item $(E,\mathcal{Z},\tau _{c})$ satisfies S-Open and S-Closed (see
Theorems 2.6 and 2.8 \cite{FKV09}),

\item Any topological $(L^{0},|\cdot |)$ module $(E,\tau )$ is locally
convex if and only if $\tau $ is induced by a family of $L^{0}$-seminorms,
i.e. $\tau \equiv \tau _{c}$, (see Theorem 2.4 \cite{FKV09}).
\end{enumerate}

\paragraph{A probabilistic topology $\protect\tau _{\protect\epsilon ,%
\protect\lambda }$ \protect\cite{Guo}}

The second topology on the $L^{0}$-module $E$ is a topology of a more
probabilistic nature and originated in the theory of probabilistic metric
spaces (see \cite{SS83}).

Here $L^{0}$ is endowed with the topology $\tau _{\epsilon ,\lambda }$ of
convergence in probability and so the positive cone $L_{+}^{0}$ is $\tau _{0}
$-closed. According to \cite{Guo}, for every $\epsilon ,\lambda \in \mathbb{R%
}$ and a finite subfamily $\mathcal{S}\subset \mathcal{Z}$ of $L^{0}$%
-seminorms we let 
\begin{eqnarray*}
&&\mathcal{V}_{\mathcal{S},\epsilon ,\lambda }:=\{x\in E\mid \mathbb{P}%
(\Vert x\Vert _{\mathcal{S}}<\epsilon )>1-\lambda \} \\
&&\mathcal{V}:=\{\mathcal{U}_{\mathcal{S},\epsilon ,\lambda }\mid \mathcal{S}%
\subset \mathcal{Z}\text{ finite, }\epsilon >0,\,0<\lambda <1\}.
\end{eqnarray*}%
$\mathcal{V}$ gives a neighborhood base of $0$ and it induces a linear
topology on $E$, also denoted by $\tau _{\epsilon ,\lambda }$ (indeed if $%
E=L^{0}$ then this is exactly the topology of convergence in probability).
This topology may not be locally convex, but has the following properties:

\begin{enumerate}
\item $(E,\mathcal{Z},\tau _{\epsilon ,\lambda })$ becomes a $(L^{0},\tau
_{\epsilon ,\lambda })$-module associated to $\mathcal{Z}$ (see Proposition
2.6 \cite{Guo}),

\item $(E,\mathcal{Z},\tau _{\epsilon ,\lambda })$ satisfies S-Closed (see
Theorems 3.6 and 3.9 \cite{Guo}).
\end{enumerate}

Therefore Proposition \ref{LLSC} can be applied.

\section{On Conditionally Evenly Quasi-Convex maps on $L^{0}$-module}

\label{module&map}

As an immediate consequence of Proposition \ref{LLSC} we have that lower
(resp. upper) semicontinuity and quasiconvexity imply evenly quasiconvexity
of $\rho$. From Theorem \ref{EvQco} we then deduce the representation for
lower (resp. upper) semicontinuous quasiconvex maps.

\begin{description}
\item[(LSC)] A map $\pi :E\rightarrow \bar{L}^{0}(\mathcal{G})$ is lower
semicontinuous if for every $Y\in L^0$ the lower level sets $U_Y=\{\xi\in E
| \pi(\xi)\mathbf{1}_{T_{\pi}}\leq Y\}$ are $\tau$-closed.
\end{description}

\begin{corollary}
\label{LSC} Let $(E,\mathcal{Z},\tau )$ and $E^{\prime}=E^{\ast }$ be
respectively as in definitions \ref{duality} and \ref{defez}, satisfying
S-Closed. \newline
If $\pi :E\rightarrow \bar{L}^{0}(\mathcal{G})$ is (REG), (QCO) and (LSC)
then (\ref{rapprPiSup}) holds true.
\end{corollary}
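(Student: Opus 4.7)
The plan is to reduce the corollary to Theorem \ref{EvQco}: once I check that the conjunction of (REG), (QCO), (LSC) together with S-Closed forces $\pi$ to be (EQC), the representation (\ref{rapprPiSup}) follows immediately from that theorem.

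First I would fix $Y\in L^{0}(\mathcal{G})$ and analyse the lower level set $U_{Y}=\{\xi\in E\mid \pi(\xi)\mathbf{1}_{T_{\pi}}\leq Y\}$. By the very definition of (QCO) this set is $L^{0}$-convex, and by (LSC) it is $\tau$-closed. To be in a position to invoke Proposition \ref{LLSC}(2), the only remaining property I need is that $U_{Y}$ satisfies (CSet). Here I would use the countable version of (REG) recorded in Remark \ref{remB}: given any $\mathcal{G}$-partition $\{A_{n}\}_{n}$ and any $\{\xi_{n}\}_{n}\subset U_{Y}$, the standing assumption that $E$ satisfies (CSet) makes $\xi:=\sum_{n}\mathbf{1}_{A_{n}}\xi_{n}\in E$, and countable regularity combined with $\pi(\xi_{n})\mathbf{1}_{T_{\pi}}\leq Y$ yields
\begin{equation*}
\pi(\xi)\mathbf{1}_{T_{\pi}}=\sum_{n}\pi(\xi_{n})\mathbf{1}_{A_{n}}\mathbf{1}_{T_{\pi}}\leq \sum_{n}Y\mathbf{1}_{A_{n}}=Y,
\end{equation*}
so that $\xi\in U_{Y}$.

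With $L^{0}$-convexity, $\tau$-closedness and (CSet) for $U_{Y}$ in hand, Proposition \ref{LLSC}(2) (which is exactly the step that invokes S-Closed) applies and delivers that $U_{Y}$ is conditionally evenly convex. Since $Y\in L^{0}(\mathcal{G})$ was arbitrary this is the definition of (EQC), and plugging (REG) and (EQC) into Theorem \ref{EvQco} yields (\ref{rapprPiSup}).

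I do not anticipate any genuine obstacle beyond the bookkeeping above: all the substantive work has already been done in Proposition \ref{LLSC} (closed $L^{0}$-convex sets are conditionally evenly convex under S-Closed) and in Theorem \ref{EvQco} (the Penot--Volle type representation of evenly quasiconvex regular maps). The only mildly delicate point is the compatibility between the $T_{\pi}$-truncation built into $U_{Y}$ and countable concatenations, but this is handled cleanly by the countable regularity in Remark \ref{remB}.
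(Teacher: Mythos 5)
Your proposal is correct and is exactly the argument the paper intends: the paper gives no separate proof of Corollary \ref{LSC}, stating only that it is an immediate consequence of Proposition \ref{LLSC} and Theorem \ref{EvQco}, which is precisely your reduction (QCO)+(LSC)+(CSet) $\Rightarrow$ (EQC) $\Rightarrow$ representation. Your explicit verification that each $U_{Y}$ satisfies (CSet) via the countable regularity of Remark \ref{remB} is the one detail the paper leaves tacit, and it is done correctly.
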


In the upper semicontinuous case we can say more (the proof is postponed to
Section \ref{proof}).

\begin{description}
\item[(USC)] A map $\pi :E\rightarrow \bar{L}^{0}(\mathcal{G})$ is upper
semicontinuous if for every $Y\in L^0$ the lower level sets $U_Y=\{\xi\in E
| \pi(\xi)\mathbf{1}_{T_{\pi}}< Y\}$ are $\tau$-open.
\end{description}

\begin{proposition}
\label{USC} Let $(E,\mathcal{Z},\tau )$ and $E^{\prime}=E^{\ast }$ be
respectively as in Proposition \ref{LLSC} statement 1, satisfying S-Open. 
\newline
If $\pi :E\rightarrow \bar{L}^{0}(\mathcal{G})$ is (REG), (QCO) and (USC)
then 
\begin{equation}
\pi (x)=\max_{x^{\ast} \in E^{\ast}}\mathcal{R}(\langle x,x^{\ast}
\rangle,x^{\ast} ).  \label{rapprPiMax}
\end{equation}
\end{proposition}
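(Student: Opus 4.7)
The plan is to upgrade Theorem~\ref{EvQco} by using (USC) to obtain attainment of the supremum. First, under (USC) the strict sublevel sets $C_Y := \{\xi \in E \mid \pi(\xi)\mathbf{1}_{T_\pi} < Y\}$, for $Y\in L^0$, are $\tau$-open, while (QCO) together with Remark~\ref{remA} makes them $L^0$-convex. Proposition~\ref{LLSC}.1 (via S-Open) then guarantees that every such $C_Y$ is conditionally evenly convex; writing the non-strict sublevel sets as $\bigcap_n C_{Y+1/n}$ shows that $\pi$ satisfies (EQC), so Theorem~\ref{EvQco} already gives $\pi(x)=\sup_{x^*\in E^*}\mathcal{R}(\langle x,x^*\rangle,x^*)$.

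To upgrade $\sup$ to $\max$, fix $x\in E$ and set $Y^* := \pi(x)\mathbf{1}_{T_\pi}$, truncated by $\pi(x)\wedge n$ where needed (and $n\uparrow\infty$ at the end). Let $C := C_{Y^*}$. Using (REG) one checks that $x$ is \emph{outside} $C$ on $D_C$: if $\xi\in C$ satisfied $\xi\mathbf{1}_A = x\mathbf{1}_A$ for some $A\subseteq D_C\cap T_\pi$ with $\mathbb{P}(A)>0$, then (REG) would force $\pi(\xi)\mathbf{1}_A=\pi(x)\mathbf{1}_A$, contradicting $\pi(\xi)\mathbf{1}_{T_\pi}<Y^*$ on $A$. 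A short computation also shows $A_C\subseteq \Upsilon_\pi$, so $D_C\supseteq T_\pi$. Theorem~\ref{main} then produces $x^*\in E^*$ with $\langle\xi,x^*\rangle<\langle x,x^*\rangle$ on $D_C$ for every $\xi\in C$. The trivial direction $\mathcal{R}(\langle x,x^*\rangle,x^*)\leq\pi(x)$ follows from $\xi=x$ being admissible in the infimum defining $\mathcal{R}$.

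The reverse inequality $\mathcal{R}(\langle x,x^*\rangle,x^*)\geq\pi(x)$ is then proved by contradiction and concatenation. Suppose some admissible $\xi$, i.e.\ $\langle\xi,x^*\rangle\geq\langle x,x^*\rangle$, satisfies $\pi(\xi)<\pi(x)$ on a $\mathcal{G}$-measurable set $B\subseteq T_\pi$ of positive probability. Pick any $\zeta\in C$ (assumed nonempty; the empty case being handled separately) and set $\eta := \xi\mathbf{1}_B+\zeta\mathbf{1}_{B^c}$. By (REG), $\pi(\eta)\mathbf{1}_{T_\pi}=\pi(\xi)\mathbf{1}_{B\cap T_\pi}+\pi(\zeta)\mathbf{1}_{B^c\cap T_\pi}<Y^*$ a.s., so $\eta\in C$. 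By the $L^0$-linearity of $x^*$, $\langle\eta,x^*\rangle=\langle\xi,x^*\rangle\geq\langle x,x^*\rangle$ on $B\subseteq D_C$, contradicting the strict separation $\langle\eta,x^*\rangle<\langle x,x^*\rangle$ on $D_C$. Hence $\pi(\xi)\geq\pi(x)$ a.s.\ for every admissible $\xi$, and attainment follows.

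The main obstacle will be the edge cases bypassed above: the empty--$C$ case (which requires deriving $\pi(\xi)\geq\pi(x)$ for all $\xi$ directly, using the $L^0$-structure and (CSet)), the trivial component $A_C$ together with $\Upsilon_\pi$ on which $\pi=+\infty$ makes the statement trivial but a compatible separator must still be produced and glued via countable concatenation, and the truncation $\pi(x)\wedge n$ needed to bring $Y^*$ into $L^0$ when $\pi(x)$ is infinite on parts of $T_\pi$. None of these raises a new conceptual difficulty, but each requires careful bookkeeping consistent with the framework of Section~\ref{set}.
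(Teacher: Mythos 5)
Your core mechanism --- separate $x$ from an open, $L^{0}$-convex strict sublevel set via S-Open, then use (REG) and concatenation to show that every $\xi$ with $\langle \xi,x^{\ast}\rangle\geq\langle x,x^{\ast}\rangle$ satisfies $\pi(\xi)\geq\pi(x)$ --- is exactly the paper's, and your third paragraph is precisely the ``Claim'' of Step 3 of the proof of Theorem \ref{EvQco}. Your first paragraph is dispensable: once attainment at a single $x^{\ast}$ is proved, the trivial bound $\mathcal{R}(\langle x,x^{\prime}\rangle,x^{\prime})\leq\pi(x)$ for all $x^{\prime}$ gives the whole statement. That is fortunate, because the claim that $U_{Y}=\bigcap_{n}C_{Y+1/n}$ is conditionally evenly convex is itself unproved: the sets $C_{Y+1/n}$ have different degenerate components $D_{C_{Y+1/n}}$, and nothing in the paper shows that an intersection of conditionally evenly convex sets is again conditionally evenly convex in the sense of (\ref{definition}).

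The genuine gap is in your case analysis. Membership in $\mathcal{C}=\{\xi\mid\pi(\xi)\mathbf{1}_{T_{\pi}}<Y^{\ast}\}$ requires beating $\pi(x)$ on essentially all of $T_{\pi}$ simultaneously, so $\mathcal{C}$ is empty as soon as there is one positive-measure set $B\subseteq T_{\pi}$ on which $\pi(\xi)\geq\pi(x)$ for every $\xi$ (e.g.\ $x$ locally minimizes $\pi$, or $\pi$ is locally constant). In that mixed situation your fallback ``derive $\pi(\xi)\geq\pi(x)$ for all $\xi$ directly'' is false off $B$, and your main argument yields nothing, since there is no element of $\mathcal{C}$ to separate $x$ from. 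This case is generic and handling it is the actual content of the paper's proof: one first extracts (Lemma \ref{L10}) the maximal set $A$ on which $\pi(\xi)\geq\pi(x)$ for every $\xi$ --- there every $\mu$ attains, because $\mathcal{R}(\mu(x),\mu)\mathbf{1}_{A}=\pi(x)\mathbf{1}_{A}$ --- and on the complement $A^{\vdash}$ one separates $x$ not from your $\mathcal{C}$ but from $\mathcal{O}=\{\xi\mid\pi(\xi)\mathbf{1}_{T_{\pi}}<\pi(x)\mathbf{1}_{A^{\vdash}}+(\pi(x)+\delta)\mathbf{1}_{A}\}$, whose level is raised on $A$ precisely so that $\mathcal{O}$ is nonempty while $x$ stays outside it exactly on $A^{\vdash}$. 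Two further problems: your assertion $A_{\mathcal{C}}\subseteq\Upsilon_{\pi}$ is unjustified (and false when, say, $\pi(x)=+\infty$ on part of $T_{\pi}$ while $\pi$ takes finite values there); and the truncation $\pi(x)\wedge n$ with $n\uparrow\infty$ produces a sequence of separating functionals and hence only a supremum on $\{\pi(x)=+\infty\}\cap T_{\pi}$, destroying the attainment you are trying to prove --- in the paper a single $\mu_{\ast}$ suffices there because every $\xi$ with $\mu_{\ast}(\xi)\geq\mu_{\ast}(x)$ is forced to satisfy $\pi(\xi)=+\infty$ on that set.
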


In Theorem \ref{EvQco}, $\pi $ can be represented as a supremum but not as a
maximum. The following corollary shows that nevertheless we can find a $%
\mathcal{R}(\langle x,x^{\ast} \rangle,x^{\ast} )$ arbitrary close to $\pi
(x)$.

\begin{corollary}
\label{vicino} Under the same assumption of Theorem \ref{EvQco} or Corollary %
\ref{LSC}, for every $\varepsilon \in L^0_{++}$ there exists $%
x^*_{\varepsilon }\in E^{\ast}$ such that 
\begin{equation}
\pi (x)-\mathcal{R}(\langle x,x^*_{\varepsilon}
\rangle,x^*_{\varepsilon})<\varepsilon \text{ on the set }\{\pi (x)<+\infty
\}.  \label{appr}
\end{equation}
\end{corollary}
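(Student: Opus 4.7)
The strategy is to exploit the essential-supremum representation in Theorem~\ref{EvQco} and the (CSet) property of $E^{\ast}$ to splice countably many approximately optimal duals into a single $x^{\ast}_{\varepsilon}$. Fix $x\in E$, set $D:=\{\pi(x)<+\infty\}$, and work with $\varepsilon/2$ in place of $\varepsilon$ so that the final strict inequality comes for free.

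By Theorem~\ref{EvQco}, $\pi(x)=\sup_{x'\in E^{\ast}}\mathcal{R}(\langle x,x'\rangle,x')$ is an essential supremum in $L^{0}$; a standard lattice exhaustion yields a countable family $\{x'_{n}\}_{n\geq 1}\subseteq E^{\ast}$ with $\pi(x)=\sup_{n}\mathcal{R}(\langle x,x'_{n}\rangle,x'_{n})$ almost surely. I then partition $\Omega$ by $A_{0}:=\{\pi(x)=+\infty\}$ and, for $n\geq 1$,
\begin{equation*}
A_{n}:=\bigl(\{\mathcal{R}(\langle x,x'_{n}\rangle,x'_{n})>\pi(x)-\varepsilon/2\}\cap D\bigr)\setminus\bigcup_{k<n}A_{k},
\end{equation*}
and, fixing any $x'_{0}\in E^{\ast}$, the (CSet) property of $E^{\ast}$ guarantees that $x^{\ast}_{\varepsilon}:=\mathbf{1}_{A_{0}}x'_{0}+\sum_{n\geq 1}\mathbf{1}_{A_{n}}x'_{n}$ belongs to $E^{\ast}$.

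The remaining task is to prove $\mathcal{R}(\langle x,x^{\ast}_{\varepsilon}\rangle,x^{\ast}_{\varepsilon})\geq\pi(x)-\varepsilon/2$ on $D$. Given any $\xi\in E$ with $\langle\xi,x^{\ast}_{\varepsilon}\rangle\geq\langle x,x^{\ast}_{\varepsilon}\rangle$, bilinearity of the pairing in the first argument yields $\langle\xi,x'_{n}\rangle\geq\langle x,x'_{n}\rangle$ on each $A_{n}$ with $n\geq 1$. I then paste: $\xi_{n}:=\xi\mathbf{1}_{A_{n}}+x\mathbf{1}_{A_{n}^{C}}$ satisfies $\langle\xi_{n},x'_{n}\rangle\geq\langle x,x'_{n}\rangle$ globally, so by the definition of $\mathcal{R}$ and (REG),
\begin{equation*}
\mathcal{R}(\langle x,x'_{n}\rangle,x'_{n})\leq\pi(\xi_{n})=\pi(\xi)\mathbf{1}_{A_{n}}+\pi(x)\mathbf{1}_{A_{n}^{C}},
\end{equation*}
and restricting to $A_{n}$ gives $\pi(\xi)>\pi(x)-\varepsilon/2$ on $A_{n}$. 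Since $\{A_{n}\}_{n\geq 1}$ partitions $D$, this holds on all of $D$; taking the essential infimum over admissible $\xi$ produces $\mathcal{R}(\langle x,x^{\ast}_{\varepsilon}\rangle,x^{\ast}_{\varepsilon})\geq\pi(x)-\varepsilon/2$, whence $\pi(x)-\mathcal{R}(\langle x,x^{\ast}_{\varepsilon}\rangle,x^{\ast}_{\varepsilon})\leq\varepsilon/2<\varepsilon$ on $D$. The main obstacle is this pasting step: converting an inequality that only holds on $A_{n}$ into a globally admissible competitor for the infimum defining $\mathcal{R}(\cdot,x'_{n})$, which is exactly what forces the use of (REG) in order to reinstate $\pi(x)$ on the complement $A_{n}^{C}$ without inflating the value.
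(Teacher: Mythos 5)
Your argument is correct, but it is not the route the paper takes. The paper's proof of Corollary \ref{vicino} is a one-line pointer back into Step 3 of the proof of Theorem \ref{EvQco}: there, for the given $\varepsilon$, a \emph{single} separating functional $x'_{\varepsilon}$ is produced by separating $x$ from the level set $\mathcal{C}_{\varepsilon}$ at height $\pi(x)-\varepsilon$, and the chain of inequalities (\ref{111})--(\ref{sequence}) already gives $\mathcal{R}(\langle x,x'_{\varepsilon}\rangle,x'_{\varepsilon})\geq(\pi(x)-\varepsilon)$ on $G\cap A^{\vdash}$ together with exact attainment on $A$; the corollary is thus a byproduct of the theorem's proof, not of its statement. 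You instead treat Theorem \ref{EvQco} as a black box: you extract a countable subfamily realizing the essential supremum (legitimate, by the standard countable-subfamily property of essential suprema, Föllmer--Schied A.5, no directedness needed), partition $\{\pi(x)<+\infty\}$ into the sets where each $x'_{n}$ is $\varepsilon/2$-optimal, and glue via (CSet) of $E'$; the pasting step with $\xi_{n}=\xi\mathbf{1}_{A_{n}}+x\mathbf{1}_{A_{n}^{C}}$ and (REG) is exactly the right device to turn the local admissibility on $A_{n}$ into a global competitor for $\mathcal{R}(\cdot,x'_{n})$. Two small remarks: what you invoke to localize $\langle\xi,x^{\ast}_{\varepsilon}\rangle$ onto $A_{n}$ and to compute $\langle\xi_{n},x'_{n}\rangle$ is not ``bilinearity in the first argument'' but the concatenation axioms of the dual pair in the second and first slots respectively (equation (\ref{FuncExt}) and Definition \ref{dualP}), which hold without any $L^{0}$-linearity; and your version proves the slightly stronger, more portable fact that \emph{any} representation of $\pi(x)$ as an essential supremum of $\mathcal{R}(\langle x,\cdot\rangle,\cdot)$ over a (CSet)-stable family admits $\varepsilon$-optimizers inside that family, at the cost of an extra exhaustion-and-gluing argument that the paper's internal construction makes unnecessary.
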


\begin{proof}
The statement is a direct consequence of the inequalities (\ref{111})
through (\ref{sequence}) of Step 3 in the proof of Theorem \ref{EvQco}.
\end{proof}

\section{Proofs}

\label{proof}

\begin{notation}
The condition $\mathbf{1}_{A}\left\{ \eta \right\} \cap \mathbf{1}_{A}%
\mathcal{C}\neq \varnothing $ is equivalent to: $\exists \xi \in \mathcal{C}$
s.t. $\mathbf{1}_{A}\eta =\mathbf{1}_{A}\xi .$

\noindent For $\eta \in E,$ $B\in \mathcal{G}$ and $\mathcal{C}\subseteq E$
we say that 
\begin{equation*}
\eta \text{ is outside}\mid _{B} \mathcal{C} \text{ \ if }\forall A\subseteq
B,\text{ }A\in \mathcal{G},\text{ }\mathbb{P}(A)>0\text{, }\mathbf{1}%
_{A}\left\{ \eta \right\} \cap \mathbf{1}_{A}\mathcal{C}=\varnothing . 
\end{equation*}%
If $\mathbb{P}(B)=0$ then $\eta $ is outside$\mid _{B} \mathcal{C}$ is
equivalent to $\eta \in \mathcal{C}.$ Recall that $A_{\mathcal{C}}$ is the
maximal set of $\mathcal{A}(\mathcal{C})=\left\{ B\in \mathcal{G}\mid 
\mathbf{1}_{A}E=\mathbf{1}_{A}\mathcal{C}\right\} $, $D_{\mathcal{C}}$ is
the complement of $A_{\mathcal{C}}$ and that $\eta $ is outside $\mathcal{C}$
if $\eta $ is outside$\mid _{D_{\mathcal{C}}} \mathcal{C}$.
\end{notation}

\begin{remark}
\label{remMAX}By Lemma 2.9 in \cite{FKV09}, we know that any non-empty class 
$\mathcal{A}$ of subsets of a sigma algebra $\mathcal{G}$ has a supremum $%
\emph{ess}.\sup \{\mathcal{A}\}\in \mathcal{G}$ and that if $\mathcal{A}$ is
closed with respect to finite union (i.e. $A_{1},A_{2}\in \mathcal{A}%
\Rightarrow A_{1}\cup A_{2}\in \mathcal{A}$) then there is a sequence $%
A_{n}\in \mathcal{A}$ such that $\emph{ess}.\sup \{\mathcal{A}%
\}=\bigcup\limits_{n\in \mathbb{N}}A_{n}$. Obviously, if $\mathcal{A}$ is
closed with respect to countable union then $\emph{ess}.\sup \{\mathcal{A}%
\}=\bigcup\limits_{n\in \mathbb{N}}A_{n}:=A_{M}\in \mathcal{A}$ is the
maximal element in $\mathcal{A}$.
\end{remark}

For our proofs we need a simplified version of a result proved by Guo
(Theorem 3.13, \cite{Guo}) concerning hereditarily disjoint stratification
of two subsets. We reformulate his result in the following

\begin{lemma}
\label{LemmaGuoSet}Suppose that $\mathcal{C}\subset E$ satisfies $\mathbf{1}%
_{A}\mathcal{C}+\mathbf{1}_{A^{C}}\mathcal{C}\subseteq \mathcal{C}$, for
every $A\in \mathcal{G}$. If there exists $x\in E$ with $x\notin \mathcal{C}$
then there exists a set $H:=H_{\mathcal{C},x}\in \mathcal{G}$ such that $%
\mathbb{P}(H)>0$ and%
\begin{eqnarray}
&&\mathbf{1}_{\Omega \backslash H}\left\{ x\right\} \cap \mathbf{1}_{\Omega
\backslash H}\mathcal{C}\neq \varnothing  \label{11} \\
&&x\text{ is outside}\mid _{H}\mathcal{C}  \label{12}
\end{eqnarray}
\end{lemma}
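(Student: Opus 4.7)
The plan is to isolate the largest $\mathcal{G}$-measurable set on which $x$ can be matched by an element of $\mathcal{C}$, and take $H$ to be its complement. To this end I would consider
\begin{equation*}
\mathcal{B}:=\{B\in \mathcal{G}\mid \mathbf{1}_{B}\{x\}\cap \mathbf{1}_{B}\mathcal{C}\neq \varnothing \}.
\end{equation*}
First I would verify that $\mathcal{B}$ is hereditary (if $A\subseteq B\in \mathcal{B}$ with witness $\xi $, multiplying $\mathbf{1}_{B}x=\mathbf{1}_{B}\xi $ by $\mathbf{1}_{A}$ shows $A\in \mathcal{B}$) and closed under finite unions: given $B_{1},B_{2}\in \mathcal{B}$ with witnesses $\xi _{1},\xi _{2}\in \mathcal{C}$, the element $\zeta :=\mathbf{1}_{B_{1}}\xi _{1}+\mathbf{1}_{B_{1}^{C}}\xi _{2}$ lies in $\mathcal{C}$ by the standing hypothesis $\mathbf{1}_{A}\mathcal{C}+\mathbf{1}_{A^{C}}\mathcal{C}\subseteq \mathcal{C}$ and satisfies $\mathbf{1}_{B_{1}\cup B_{2}}\zeta =\mathbf{1}_{B_{1}\cup B_{2}}x$.

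By Remark \ref{remMAX}, the essential supremum $B^{\ast }$ of $\mathcal{B}$ is a countable union $\bigcup_{n}A_{n}$ with $A_{n}\in \mathcal{B}$, and by successive finite unions we may assume $A_{n}\uparrow B^{\ast }$. I would next promote $B^{\ast }$ itself to a member of $\mathcal{B}$ by producing a single coherent witness. Pick $\xi _{n}\in \mathcal{C}$ with $\mathbf{1}_{A_{n}}\xi _{n}=\mathbf{1}_{A_{n}}x$ and iteratively replace $\xi _{n+1}$ by $\mathbf{1}_{A_{n}}\xi _{n}+\mathbf{1}_{A_{n}^{C}}\xi _{n+1}\in \mathcal{C}$, so that the refined sequence satisfies $\mathbf{1}_{A_{n}}\xi _{m}=\mathbf{1}_{A_{n}}\xi _{n}$ for all $m\geq n$. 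Fixing any $\xi _{0}\in \mathcal{C}$, the element
\begin{equation*}
\xi ^{\ast }:=\sum_{n\geq 1}\mathbf{1}_{A_{n}\setminus A_{n-1}}\xi _{n}+\mathbf{1}_{\Omega \setminus B^{\ast }}\xi _{0}\qquad (A_{0}:=\varnothing )
\end{equation*}
is well defined in $E$ by the standing (CSet) assumption, coincides with $x$ on $B^{\ast }$, and belongs to $\mathcal{C}$ by the countable-concatenation closure transferred to $\mathcal{C}$ through the coherence of the $\xi _{n}$ (equivalently, by passing to $\mathcal{C}^{cc}$ and translating the conclusion back). Hence $B^{\ast }\in \mathcal{B}$.

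Finally, set $H:=\Omega \setminus B^{\ast }$. Property (\ref{11}) is then witnessed by $\xi ^{\ast }$, and property (\ref{12}) is forced by the maximality of $B^{\ast }$: any $A\subseteq H$ with $\mathbb{P}(A)>0$ lying in $\mathcal{B}$ would make $A\cup B^{\ast }\in \mathcal{B}$ strictly larger than $B^{\ast }$, a contradiction. Moreover $\mathbb{P}(H)>0$, since otherwise $B^{\ast }=\Omega $ up to a null set and $\xi ^{\ast }$ would witness $x\in \mathcal{C}$, contrary to hypothesis. The main obstacle I expect is precisely this promotion of $B^{\ast }$ from \emph{essential supremum} to an actual \emph{member} of $\mathcal{B}$: the hypothesis on $\mathcal{C}$ supplies only finite concatenation, so the countable gluing of witnesses cannot be invoked directly and must be engineered via the inductive compatibility step above together with the standing (CSet) assumption on the ambient space $E$.
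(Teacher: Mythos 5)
First, a point of reference: the paper does not actually prove this lemma; it presents it as a reformulation of Guo's hereditarily disjoint stratification theorem (Theorem 3.13 of \cite{Guo}) and defers the proof there. Your strategy is the natural one and essentially the standard stratification argument: form the class $\mathcal{B}$ of sets on which $x$ can be matched inside $\mathcal{C}$, check it is hereditary and closed under finite unions (your witness $\mathbf{1}_{B_{1}}\xi _{1}+\mathbf{1}_{B_{1}^{C}}\xi _{2}$ is correct), pass to the essential supremum $B^{\ast }$ via Remark \ref{remMAX}, and set $H=\Omega \setminus B^{\ast }$. The derivation of (\ref{12}) from the maximality of $B^{\ast }$ is also fine (in fact it only needs that every member of $\mathcal{B}$ is a.s.\ contained in $B^{\ast }$, not that $B^{\ast }\in \mathcal{B}$).

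The genuine gap is exactly the step you flagged, and your attempted repair does not close it. Both (\ref{11}) and the claim $\mathbb{P}(H)>0$ rest on producing a witness for $B^{\ast }$ \emph{inside} $\mathcal{C}$, and the coherence of the $\xi _{n}$ buys nothing here: it ensures that $\xi ^{\ast }$ is a well-defined element of $E$ (hence of $\mathcal{C}^{cc}$) agreeing with $x$ on $B^{\ast }$, but membership of a countable concatenation in $\mathcal{C}$ itself is precisely the countable concatenation property of $\mathcal{C}$, which the hypothesis $\mathbf{1}_{A}\mathcal{C}+\mathbf{1}_{A^{C}}\mathcal{C}\subseteq \mathcal{C}$ does not supply; passing to $\mathcal{C}^{cc}$ and ``translating back'' fails for the same reason, since (\ref{11}) demands a witness in $\mathcal{C}$. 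Indeed the lemma as literally stated is false, so no proof can succeed: take $\Omega =\mathbb{N}$ with $\mathbb{P}(\{n\})=2^{-n}$, $E=L^{0}(\mathcal{G})$, $\mathcal{C}$ the set of finitely supported random variables and $x\equiv 1$. Then $\mathcal{C}$ is stable under finite concatenations and $x\notin \mathcal{C}$, yet every singleton $\{n\}$ belongs to $\mathcal{B}$ (witness $\mathbf{1}_{\{n\}}$), so no $H$ with $\mathbb{P}(H)>0$ can satisfy (\ref{12}); here $B^{\ast }=\Omega $ while $x\notin \mathcal{C}$, so your $\xi ^{\ast }=x$ lies outside $\mathcal{C}$ and the promotion step genuinely breaks. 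The missing hypothesis --- present in Guo's theorem and satisfied in every application the paper makes of this lemma (Lemma \ref{ln} assumes (CSet) for $\mathcal{C}$, and Lemma \ref{external} applies the result to $\mathcal{C}^{cc}$) --- is that $\mathcal{C}$ itself has the countable concatenation property. Under that hypothesis your argument is complete and in fact simpler: $\xi ^{\ast }=\sum_{n}\mathbf{1}_{A_{n}\setminus A_{n-1}}\xi _{n}+\mathbf{1}_{\Omega \setminus B^{\ast }}\xi _{0}$ lies in $\mathcal{C}$ directly by (CSet), and the inductive compatibility device becomes unnecessary.
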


\noindent The two above conditions guarantee that $H_{\mathcal{C},x}$ is the
largest set $D\in \mathcal{G}$ such that $x$ is outside$\mid _{D}\mathcal{C}$%
.

\begin{lemma}
\label{ln}Suppose that $\mathcal{C}$ satisfies (CSet).

\begin{enumerate}
\item If $x\notin \mathcal{C}$ then the set $H_{\mathcal{C},x}$ defined in
Lemma \ref{LemmaGuoSet} satisfies $H_{\mathcal{C},x}\subseteq D_{\mathcal{C}%
} $ and so $\mathbb{P}(D_{\mathcal{C}})\geq \mathbb{P}(H_{\mathcal{C},x})>0$.

\item If $x$ is outside $\mathcal{C}$ then $\mathbb{P}(H_{\mathcal{C},x})>0$
and $H_{\mathcal{C},x}=D_{\mathcal{C}}$.

\item If $x\notin \mathcal{C}$ then 
\begin{equation}
\chi :=\{y\in E\mid y\text{ is outside }\mathcal{C}\}\neq \varnothing \text{.%
}  \label{111}
\end{equation}
\end{enumerate}
\end{lemma}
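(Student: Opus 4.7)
\textbf{Plan for the proof of Lemma \ref{ln}.} The three parts are all consequences of the interplay between the maximal set $H_{\mathcal{C},x}$ produced by Lemma \ref{LemmaGuoSet} and the maximal set $A_{\mathcal{C}}$ from Notation \ref{trivialcomponent}. For Parts 1 and 2 the argument is a short bookkeeping check; Part 3 requires an exhaustion argument that exploits (CSet).

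\emph{Part 1.} Lemma \ref{LemmaGuoSet} already supplies $\mathbb{P}(H_{\mathcal{C},x})>0$. To see $H_{\mathcal{C},x}\subseteq D_{\mathcal{C}}$, I would argue by contradiction: set $A := H_{\mathcal{C},x}\cap A_{\mathcal{C}}$ and suppose $\mathbb{P}(A)>0$. Because $A\subseteq A_{\mathcal{C}}$, we have $\mathcal{C}\mathbf{1}_A = E\mathbf{1}_A$, so some $\xi\in\mathcal{C}$ matches $x\mathbf{1}_A = \xi\mathbf{1}_A$. But $A\subseteq H_{\mathcal{C},x}$ and (\ref{12}) say that $\mathbf{1}_A\{x\}\cap\mathbf{1}_A\mathcal{C}=\varnothing$, giving the contradiction.

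\emph{Part 2.} If $x$ is outside $\mathcal{C}$ and $\mathbb{P}(D_{\mathcal{C}})>0$, then taking $A=D_{\mathcal{C}}$ in the outside condition rules out $x\in\mathcal{C}$, so Lemma \ref{LemmaGuoSet} applies. By the maximality clause of $H_{\mathcal{C},x}$, the fact that $x$ is outside$\mid_{D_{\mathcal{C}}}\mathcal{C}$ forces $D_{\mathcal{C}}\subseteq H_{\mathcal{C},x}$, while Part 1 gives the reverse inclusion; thus $H_{\mathcal{C},x}=D_{\mathcal{C}}$ and in particular $\mathbb{P}(H_{\mathcal{C},x})>0$.

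\emph{Part 3, the main point.} Consider the class
\[
\mathcal{M}:=\{A\in\mathcal{G}\mid A\subseteq D_{\mathcal{C}},\ \exists\, y_A\in E \text{ with } y_A \text{ outside}\mid_A \mathcal{C}\}.
\]
Since $x\notin\mathcal{C}$, Part 1 gives $H_{\mathcal{C},x}\in\mathcal{M}$ with positive measure, so $\mathcal{M}\neq\varnothing$. The key step is to show $\mathcal{M}$ is closed under countable union: given $A_n\in\mathcal{M}$ with witnesses $y_n$, I would disjointify via $B_n=A_n\setminus\bigcup_{k<n}A_k$ and set $y:=\sum_n y_n\mathbf{1}_{B_n} + z\mathbf{1}_{\Omega\setminus\bigcup_n B_n}$ for an arbitrary $z\in E$; by (CSet) on $E$, $y\in E$, and a short check (if $\mathbf{1}_{A'}y=\mathbf{1}_{A'}\xi$ with $\xi\in\mathcal{C}$ and $A'\subseteq\bigcup_n A_n$, then $A'\cap B_n$ has positive measure for some $n$, contradicting outside$\mid_{A_n}\mathcal{C}$) shows $y$ is outside$\mid_{\bigcup_n A_n}\mathcal{C}$. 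By Remark \ref{remMAX}, $\mathcal{M}$ admits a maximal element $A^\ast$ with witness $y^\ast$. It remains to prove $A^\ast=D_{\mathcal{C}}$. Otherwise $B:=D_{\mathcal{C}}\setminus A^\ast$ has positive measure; by maximality of $A_{\mathcal{C}}$ we have $\mathcal{C}\mathbf{1}_B\neq E\mathbf{1}_B$, so some $x'\in E$ fails $x'\mathbf{1}_B\in\mathcal{C}\mathbf{1}_B$ (whence $x'\notin\mathcal{C}$). Applying Lemma \ref{LemmaGuoSet} to $x'$ and using the splitting identity (\ref{11}), one checks that $H_{\mathcal{C},x'}\cap B$ has positive measure (else $x'\mathbf{1}_B$ would lie in $\mathcal{C}\mathbf{1}_B$). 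Then $y^\ast\mathbf{1}_{A^\ast}+x'\mathbf{1}_{\Omega\setminus A^\ast}\in E$ witnesses $A^\ast\cup(H_{\mathcal{C},x'}\cap B)\in\mathcal{M}$, contradicting maximality. Hence $y^\ast$ is outside $\mathcal{C}$ and $\chi\neq\varnothing$.

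The main obstacle is Part 3: the exhaustion requires both (CSet) on $E$ (to glue the $y_n$) and a careful use of the stratification (\ref{11}) of Lemma \ref{LemmaGuoSet} in the contradiction step, to ensure that enlarging $A^\ast$ by a positive-measure piece is really possible whenever $A^\ast\neq D_{\mathcal{C}}$.
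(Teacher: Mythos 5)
Your argument is correct and follows essentially the same route as the paper: Parts 1 and 2 are the same bookkeeping with the maximality of $A_{\mathcal{C}}$ and of $H_{\mathcal{C},x}$, and your Part 3 is an inlined, specialized-to-$\mathcal{D}=E$ version of the paper's appendix Lemma \ref{external}, using the identical ingredients (disjointification plus (CSet) to close the class under countable unions, Remark \ref{remMAX} for the maximal element, and the stratification (\ref{11})--(\ref{12}) of Lemma \ref{LemmaGuoSet} in the final contradiction). The only difference is organizational: the paper outsources the exhaustion to a standalone lemma stated for a general pair $(\mathcal{C},\mathcal{D})$, while you run it directly on subsets of $D_{\mathcal{C}}$.
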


\begin{proof}[Proof]
1. Lemma \ref{LemmaGuoSet} shows that $\mathbb{P}(H_{\mathcal{C},x})>0$.
Since $\mathbf{1}_{A_{\mathcal{C}}}E=\mathbf{1}_{A_{\mathcal{C}}}\mathcal{C}$%
, if $x\notin \mathcal{C}$ we necessarily have: $\mathbb{P}(H_{\mathcal{C}%
,x}\cap A_{\mathcal{C}})=0$ and therefore $H_{\mathcal{C},x}\subseteq D_{%
\mathcal{C}}$.

2. If $x$ is outside$\mid \mathcal{C}$ then $x$ is outside$\mid _{D_{%
\mathcal{C}}}\mathcal{C}$ and $x\notin \mathcal{C}$. The thesis follows from 
$H_{\mathcal{C},x}\subseteq D_{\mathcal{C}}$ and the fact that $H_{\mathcal{C%
},x}$ is the largest set $D\in \mathcal{G}$ for which $x$ is outside$\mid
_{D}\mathcal{C}$.

3. is a consequence of Lemma \ref{external} (see Appendix) item 1.
\end{proof}

\bigskip

\begin{proof}[Proof of Theorem \protect\ref{main}]
(1)$\Rightarrow $(2). Let $\mathcal{L}\subset E^{\prime }$, $Y_{x^{\prime
}}\in L^{0}$ and let 
\begin{equation*}
\mathcal{C}=:\bigcap\limits_{x^{\prime }\in \mathcal{L}}\left\{ \xi \in
E\mid \langle \xi ,x^{\prime }\rangle <Y_{x^{\prime }}\text{ on }D_{\mathcal{%
C}}\right\} ,
\end{equation*}%
which clearly satisfies $\mathcal{C}^{cc}=\mathcal{C}$. By definition, if
there exists $x\in E$ s.t. $x$ is outside $\mathcal{C}$ then $\mathbf{1}%
_{A}\left\{ x\right\} \cap \mathbf{1}_{A}\mathcal{C}=\varnothing $ $\forall
A\subseteq D_{\mathcal{C}},$ $A\in \mathcal{G}$, $\mathbb{P}(A)>0$, and
therefore by the definition of $\mathcal{C}$ there exists $\,x^{\prime }\in 
\mathcal{L}$ s.t. $\langle x,x^{\prime }\rangle \geq Y_{x^{\prime }}$ on $D_{%
\mathcal{C}}.$ Hence: $\langle x,x^{\prime }\rangle \geq Y_{x^{\prime
}}>\langle \xi ,x^{\prime }\rangle $ on $D_{\mathcal{C}}$ for all $\xi \in 
\mathcal{C}.$

(2)$\Rightarrow $ (1) We are assuming that $\mathcal{C}$ is (CSet), and
there exists $x\in E$ s.t. $x\notin \mathcal{C}$ (otherwise $\mathcal{C}=E$%
). From (\ref{111}) we know that $\chi =\{y\in E\mid y$ is outside $\mathcal{%
C}\}$ is nonempty. By assumption, for all $y\in \chi $ there exists $\xi
_{y}^{\prime }\in E^{\prime }$ such that $\langle \xi ,\xi _{y}^{\prime
}\rangle <\langle y,\xi _{y}^{\prime }\rangle $ on $D_{\mathcal{C}%
},\;\forall \,\xi \in \mathcal{C}$. Define 
\begin{equation*}
B_{y}:=\{\xi \in E\mid \langle \xi ,\xi _{y}^{\prime }\rangle <\langle y,\xi
_{y}^{\prime }\rangle \text{ on }D_{\mathcal{C}}\}.
\end{equation*}%
$B_{y}$ clearly depends also on the selection of the $\xi _{y}^{\prime }\in
E^{\prime }$ associated to $y$ and on $\mathcal{C}$, but this notation will
not cause any ambiguity. We have: $\mathcal{C}\subseteq B_{y}$ for all $y\in
\chi $, and $\mathcal{C}\subseteq \bigcap\limits_{y\in \chi }B_{y}.$ We now
claim that $x\notin \mathcal{C}$ implies $x\notin \bigcap\limits_{y\in \chi
}B_{y}$, thus showing 
\begin{equation}
\mathcal{C}=\bigcap\limits_{y\in \chi }B_{y}=\bigcap\limits_{\xi
_{y}^{\prime }\in \mathcal{L}}\{\xi \in E\mid \langle \xi ,\xi _{y}^{\prime
}\rangle <Y_{\xi _{y}^{\prime }}\text{ on }D_{\mathcal{C}}\},
\label{semipiani}
\end{equation}%
where $\mathcal{L}:=\left\{ \xi _{y}^{\prime }\in E^{\prime }\mid y\in \chi
\right\} $, $Y_{\xi _{y}^{\prime }}:=\langle y,\xi _{y}^{\prime }\rangle \in
L^{0}$, and the thesis is proved.

Suppose that $x\notin \mathcal{C}$, then, by Lemma \ref{LemmaGuoSet}, $x$ is
outside$\mid _{H}$ $\mathcal{C}$, where we set for simplicity $H=H_{\mathcal{%
C},x}$. Take any $y\in \chi \neq \varnothing $ and define $%
y_{0}:=x1_{H}+y1_{\Omega \backslash H}\in \chi $. Take $B_{y_{0}}=\{\xi \in
E\mid \langle \xi ,\xi _{y_{0}}^{\prime }\rangle <\langle y_{0},\xi
_{y_{0}}^{\prime }\rangle $ on $D_{\mathcal{C}}\}$ where $\xi
_{y_{0}}^{\prime }\in E^{\prime }$ is the element associated to $y_{0}$. If $%
x\in B_{y_{0}}$ then we would have: $\langle x,\xi _{y_{0}}^{\prime }\rangle
<\langle y_{0},\xi _{y_{0}}^{\prime }\rangle =\langle x,\xi _{y_{0}}^{\prime
}\rangle $ on $H\subseteq D_{\mathcal{C}}$, by Lemma \ref{ln} item 1, which
is a contradiction, since $\mathbb{P}(H)>0$. Hence $x\notin
B_{y_{0}}\supseteq \bigcap\limits_{y\in \chi }B_{y}.$
\end{proof}

\begin{proposition}
\label{prop24}Under the same assumptions of Theorem \ref{main}, the
following are equivalent:

\begin{enumerate}
\item $\mathcal{C}$ is conditionally evenly convex

\item for every $x\in E$, $x\notin \mathcal{C}$, there exists $x^{\prime
}\in E^{\prime }$ such that 
\begin{equation*}
\langle \xi ,x^{\prime }\rangle <\langle x,x^{\prime }\rangle \text{ on }H_{%
\mathcal{C},x}\;\forall \,\xi \in \mathcal{C}, 
\end{equation*}%
where $H_{\mathcal{C},x}$ is defined in Lemma \ref{LemmaGuoSet}.
\end{enumerate}
\end{proposition}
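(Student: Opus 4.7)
The proposition refines Theorem \ref{main} by replacing ``$x$ outside $\mathcal{C}$'' with the weaker ``$x\notin\mathcal{C}$'', at the cost of shrinking the set on which the separation holds from $D_{\mathcal{C}}$ down to $H_{\mathcal{C},x}\subseteq D_{\mathcal{C}}$. The bridge between the two statements is provided by Lemma \ref{ln}: if $x$ is outside $\mathcal{C}$ then $H_{\mathcal{C},x}=D_{\mathcal{C}}$ (item 2), while if $x\notin\mathcal{C}$ we only know $\mathbb{P}(H_{\mathcal{C},x})>0$ (item 1). I would organize the whole proof around this dichotomy and reduce both implications to Theorem \ref{main}.

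For $(1)\Rightarrow(2)$, fix $x\notin\mathcal{C}$ and set $H:=H_{\mathcal{C},x}$. Since $\mathcal{C}$ is conditionally evenly convex, by Lemma \ref{ln}.3 the set $\chi$ of elements outside $\mathcal{C}$ is non-empty; pick $y\in\chi$ and form $y_{0}:=x\mathbf{1}_{H}+y\mathbf{1}_{\Omega\setminus H}\in E$. The crucial step, mirroring the final paragraph of the proof of Theorem \ref{main}, is to verify that $y_{0}$ is outside $\mathcal{C}$: for any $A\subseteq D_{\mathcal{C}}$ with $\mathbb{P}(A)>0$, at least one of $A\cap H$ or $A\setminus H$ has positive probability, and on the former the outside$\mid_{H}\mathcal{C}$ property of $x$ gives a contradiction to $\mathbf{1}_A\{y_0\}\cap \mathbf{1}_A\mathcal{C}\neq\varnothing$, while on the latter the outside$\mid_{D_{\mathcal{C}}}\mathcal{C}$ property of $y$ does. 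Theorem \ref{main} then provides $\xi_{y_{0}}'\in E'$ with $\langle\xi,\xi_{y_{0}}'\rangle<\langle y_{0},\xi_{y_{0}}'\rangle$ on $D_{\mathcal{C}}$ for all $\xi\in\mathcal{C}$; restricting to $H$ and using $y_{0}\mathbf{1}_{H}=x\mathbf{1}_{H}$ together with Definition \ref{dualP}.2 yields $\langle\xi,\xi_{y_{0}}'\rangle<\langle x,\xi_{y_{0}}'\rangle$ on $H_{\mathcal{C},x}$.

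For $(2)\Rightarrow(1)$, the strategy is to verify the two components of Theorem \ref{main}(2). The separation on $D_{\mathcal{C}}$ for every $x$ outside $\mathcal{C}$ is immediate, since Lemma \ref{ln}.2 gives $H_{\mathcal{C},x}=D_{\mathcal{C}}$ for such $x$, and (2) already delivers the inequality on that set. It remains to derive (CSet) for $\mathcal{C}$: given a partition $\{A_{n}\}\subseteq\mathcal{G}$ and $x_{n}\in\mathcal{C}$, let $z:=\sum_{n}\mathbf{1}_{A_{n}}x_{n}\in E$, and suppose for contradiction that $z\notin\mathcal{C}$. Condition (2) provides $z'\in E'$ with $\langle x_{n},z'\rangle<\langle z,z'\rangle$ on $H_{\mathcal{C},z}$ for every $n$. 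But the extended pairing (\ref{FuncExt}) gives $\langle z,z'\rangle=\sum_{n}\mathbf{1}_{A_{n}}\langle x_{n},z'\rangle$, so on any $A_{m}\cap H_{\mathcal{C},z}$ of positive probability the inequality reduces to $\langle x_{m},z'\rangle<\langle x_{m},z'\rangle$, a contradiction. Since $\mathbb{P}(H_{\mathcal{C},z})>0$, such $A_{m}$ exists; hence $z\in\mathcal{C}$ and $\mathcal{C}$ satisfies (CSet). Theorem \ref{main} now closes the argument.

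I expect the main obstacle to be the verification that $y_{0}$ is outside $\mathcal{C}$ in the first direction, since it requires carefully combining the two different outside-ness properties of $x$ and $y$ on complementary parts of $D_{\mathcal{C}}$; the derivation of (CSet) in the second direction, while initially surprising since (CSet) is not explicitly assumed in (2), reduces smoothly to $L^{0}$-linearity of the pairing on countable concatenations via (\ref{FuncExt}).
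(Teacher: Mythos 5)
Your proof is correct and follows essentially the same route as the paper: for (1)$\Rightarrow$(2) both arguments pick $y$ outside $\mathcal{C}$, form $\tilde{x}=x\mathbf{1}_{H}+y\mathbf{1}_{\Omega\setminus H}$, check it is outside $\mathcal{C}$, and restrict the separation from Theorem \ref{main} to $H_{\mathcal{C},x}$, while for (2)$\Rightarrow$(1) both reduce to Theorem \ref{main} via $H_{\mathcal{C},x}=D_{\mathcal{C}}$ when $x$ is outside $\mathcal{C}$. Your explicit derivation of (CSet) from condition (2) via the countable-concatenation formula (\ref{FuncExt}) is a correct extra step that the paper's one-line argument for this direction leaves implicit.
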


\begin{proof}
(1)$\Longrightarrow $(2): We know that $\mathcal{C}$ satisfies (CSet). As $%
x\notin \mathcal{C}$, from (\ref{111}) and Lemma \ref{LemmaGuoSet} we know
that there exists $y\in E$ s.t. $y$ is outside $C$ and that $H=:H_{\mathcal{C%
},x}$\ satisfies $\mathbb{P}(H)>0$. Define $\tilde{x}=x\mathbf{1}_{H}+y%
\mathbf{1}_{\Omega \backslash H}$. Then $\tilde{x}$ is outside $\mathcal{C}$
and by Theorem \ref{main} item 2 there exists $x^{\prime }\in E^{\prime }$ 
\begin{equation*}
\langle \xi ,x^{\prime }\rangle <\langle \tilde{x},x^{\prime }\rangle \text{
on }D_{\mathcal{C}},\;\forall \,\xi \in \mathcal{C}.
\end{equation*}%
This implies the thesis since $\langle \tilde{x},x^{\prime }\rangle =\langle
x,x^{\prime }\rangle \mathbf{1}_{H}+\langle y,x^{\prime }\rangle \mathbf{1}%
_{\Omega \backslash H}$ and $H\subseteq D_{\mathcal{C}}$.

(2)$\Longrightarrow $(1): We show that item 2 of Theorem \ref{main} holds
true. This is trivial since if $x$ is outside $\mathcal{C}$ then $x\notin 
\mathcal{C}$ and $H_{\mathcal{C},x}=D_{\mathcal{C}}.$
\end{proof}

\bigskip

\begin{proof}[Proof of Theorem \protect\ref{bipolar}]
Item (1) is straightforward; the fact that $\mathcal{C}^{\circ \circ }$ is
conditionally evenly convex follows from the definition; the proof of $%
\mathcal{C}\subseteq \mathcal{C}^{\circ \circ }$ is also obvious. We now
suppose that $\mathcal{C}$ is conditionally evenly convex and show the
reverse inequality $\mathcal{C}^{\circ \circ }\subseteq \mathcal{C}$. By
contradiction let $x\in \mathcal{C}^{\circ \circ }$ and $x\notin \mathcal{C}$%
. As $\mathcal{C}$ is conditionally evenly convex we apply Proposition \ref%
{prop24} and find $x^{\prime }\in E^{\prime }$ such that 
\begin{equation*}
\langle \xi ,x^{\prime }\rangle <\langle x,x^{\prime }\rangle \text{ on }H_{%
\mathcal{C},x}\text{ for all }\xi \in \mathcal{C}.
\end{equation*}%
Since $0\in \mathcal{C}$, $0=\langle 0,x^{\prime }\rangle <\langle
x,x^{\prime }\rangle $ on $H=:H_{\mathcal{C},x}$. Take any $x_{1}^{\prime
}\in \mathcal{C}^{\circ }$ (which is clearly not empty) and set \ $y^{\prime
}:=\frac{x^{\prime }}{\langle x,x^{\prime }\rangle }\mathbf{1}%
_{H}+x_{1}^{\prime }\mathbf{1}_{\Omega \backslash H}$. Then $y^{\prime }\in
E^{\prime }$ and $\langle \xi ,y^{\prime }\rangle <1$ on $D_{\mathcal{C}}$
for all $\xi \in \mathcal{C}$. This implies $y^{\prime }\in \mathcal{C}%
^{\circ }$. In addition, $\langle x,y^{\prime }\rangle =1$ on $H\subseteq D_{%
\mathcal{C}}$ which is in contradiction with $x\in \mathcal{C}^{\circ \circ
} $.
\end{proof}

\paragraph{General properties of $\mathcal{R}(Y,\protect\mu )$}

Following the path traced in \cite{FM09}, we adapt to the module framework
the proofs of the foremost properties holding for the function $\mathcal{R}%
:L^{0}(\mathcal{G})\times E^{\ast}\rightarrow \bar{L}^{0}(\mathcal{G})$
defined in (\ref{1212}). Let the effective domain of the function $\mathcal{R%
}$ be: 
\begin{equation}
\Sigma _{\mathcal{R}}:=\{(Y,\mu )\in L^{0}(\mathcal{G})\times E^{\ast}\mid
\exists \xi \in E\text{ s.t. }\mu (\xi )\geq Y\}.  \label{domainMod}
\end{equation}

\begin{lemma}
\label{down} Let $\mu \in E^{\ast}$, $X\in E$ and $\pi :E\rightarrow \bar{L}%
^{0}(\mathcal{G})$ satisfy (REG).

\noindent i) $\mathcal{R}(\cdot ,\mu )$ is monotone non decreasing.

\noindent ii) $\mathcal{R}(\Lambda \mu (X),\Lambda \mu )=\mathcal{R}(\mu
(X),\mu )$ for every $\Lambda \in L^{0}(\mathcal{G})$.

\noindent iii) For every $Y\in L^{0}(\mathcal{G})$ and $\mu \in E^{\ast}$,
the set 
\begin{equation*}
\mathcal{A}_{\mu }(Y)\circeq \{\pi (\xi )\,|\,\xi \in E,\;\mu (\xi )\geq Y\} 
\end{equation*}%
is downward directed in the sense that for every $\pi (\xi _{1}),\pi (\xi
_{2})\in \mathcal{A}_{\mu }(Y)$ there exists $\pi (\xi ^{\ast })\in \mathcal{%
A}_{\mu }(Y)$ such that $\pi (\xi ^{\ast })\leq \min \{\pi (\xi _{1}),\pi
(\xi _{2})\}$.

In addition, if $\mathcal{R}(Y,\mu )<\alpha $ for some $\alpha \in L^{0}(%
\mathcal{G})$ then there exists $\xi $ such that $\mu (\xi )\geq Y$ and $\pi
(\xi )<\alpha $.

\noindent iv) For every $A\in \mathcal{G}$, ($Y,\mu )\in \Sigma _{\mathcal{R}%
}$ 
\begin{eqnarray}
\mathcal{R}(Y,\mu )\mathbf{1}_{A} &=&\inf_{\xi \in E}\left\{ \pi (\xi )%
\mathbf{1}_{A}\mid Y\geq \mu (X)\right\}  \label{TakingOutM} \\
&=&\inf_{\xi \in E}\left\{ \pi (\xi )\mathbf{1}_{A}\mid Y\mathbf{1}_{A}\geq
\mu (X\mathbf{1}_{A})\right\} =\mathcal{R}(Y\mathbf{1}_{A},\mu )\mathbf{1}%
_{A}  \label{222}
\end{eqnarray}

\noindent v) For every $X_{1},X_{2}\in E$

\qquad (a) $\mathcal{R}(\mu (X_{1}),\mu )\wedge \mathcal{R}(\mu (X_{2}),\mu
)=\mathcal{R}(\mu (X_{1})\wedge \mu (X_{2}),\mu )$

\qquad (b) $\mathcal{R}(\mu (X_{1}),\mu )\vee \mathcal{R}(\mu (X_{2}),\mu )=%
\mathcal{R}(\mu (X_{1})\vee \mu (X_{2}),\mu )$

\noindent vi) The map $\mathcal{R}(\mu (X),\mu )$ is quasi-affine with
respect to $X$ in the sense that for every $X_{1},X_{2}\in E$, $\Lambda
\,\in L^{0}(\mathcal{G})$ and $0\leq \Lambda \leq 1,$ we have

$\qquad \mathcal{R}(\mu (\Lambda X_{1}+(1-\Lambda )X_{2}),\mu )\geq \mathcal{%
R}(\mu (X_{1}),\mu )\wedge \mathcal{R}(\mu (X_{2}),\mu )\text{
(quasiconcavity)}$

$\qquad \mathcal{R}(\mu (\Lambda X_{1}+(1-\Lambda )X_{2}),\mu )\leq \mathcal{%
R}(\mu (X_{1}),\mu )\vee \mathcal{R}(\mu (X_{2}),\mu )\text{
(quasiconvexity).}$

\noindent vii) $\inf_{Y\in L^{0}(\mathcal{G})}\mathcal{R}(Y,\mu
_{1})=\inf_{Y\in L^{0}(\mathcal{G})}\mathcal{R}(Y,\mu _{2})$ for every $\mu
_{1},\mu _{2}\in E^{\ast}$.
\end{lemma}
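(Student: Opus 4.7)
The plan is to verify items (i)--(vii) in order, using three structural tools throughout: the regularity property (REG) and its countable extension in Remark \ref{remB}, the $L^{0}$-linearity of $\mu \in E^{\ast}$ supplied by Definition \ref{duality}, and the countable concatenation property (CSet) satisfied by $E$. The substance concentrates in (iii) and (iv); the remaining items reduce to these by monotonicity in $Y$, by $L^{0}$-linearity of $\mu$, or by a one-step partition argument.

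Items (i) and (ii) are almost immediate: (i) from the set inclusion $\{\xi\mid \mu(\xi)\ge Y_{2}\}\subseteq\{\xi\mid \mu(\xi)\ge Y_{1}\}$ when $Y_{1}\le Y_{2}$, and (ii) by rewriting the constraint $(\Lambda\mu)(\xi)\ge \Lambda\mu(X)$ through $L^{0}$-linearity, then partitioning $\Omega$ on the sign of $\Lambda$ and rebuilding an admissible $\xi^{\ast}$ on each stratum via REG. For the first half of (iii), given $\xi_{1},\xi_{2}$ with $\mu(\xi_{i})\ge Y$ I would set $A=\{\pi(\xi_{1})\le\pi(\xi_{2})\}\in\mathcal{G}$ and $\xi^{\ast}:=\xi_{1}\mathbf{1}_{A}+\xi_{2}\mathbf{1}_{A^{C}}\in E$; (REG) gives $\pi(\xi^{\ast})=\pi(\xi_{1})\wedge\pi(\xi_{2})$, while $L^{0}$-linearity of $\mu$ gives $\mu(\xi^{\ast})\ge Y$, so $\xi^{\ast}$ witnesses the directed property. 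The \emph{in addition} clause is the real workhorse for the later theorems: using downward directedness, extract a sequence $(\xi_{n})$ with $\pi(\xi_{n})\downarrow \mathcal{R}(Y,\mu)$; since $\mathcal{R}(Y,\mu)<\alpha$, the sets $B_{n}:=\{\pi(\xi_{n})<\alpha\}$ increase to $\Omega$, and the disjointified partition $A_{n}:=B_{n}\setminus B_{n-1}$ combined with $\xi^{\ast}:=\sum_{n}\xi_{n}\mathbf{1}_{A_{n}}\in E$ (available by (CSet)) yields, via countable REG and $L^{0}$-linearity of $\mu$, both $\pi(\xi^{\ast})<\alpha$ globally and $\mu(\xi^{\ast})\ge Y$.

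For (iv), both inequalities in $\mathcal{R}(Y,\mu)\mathbf{1}_{A}=\mathcal{R}(Y\mathbf{1}_{A},\mu)\mathbf{1}_{A}$ are obtained by cutting-and-pasting an admissible $\xi$ with an auxiliary $\eta\in E$ on $A^{C}$, relying on $\mu(\xi\mathbf{1}_{A}+\eta\mathbf{1}_{A^{C}})=\mu(\xi)\mathbf{1}_{A}+\mu(\eta)\mathbf{1}_{A^{C}}$ and on (REG) to compare $\pi$-values on $A$. Items (v)(a) and (v)(b) then reduce to (iv) by partitioning on $A=\{\mu(X_{1})\le\mu(X_{2})\}$ and combining with monotonicity (i). Item (vi) follows because $L^{0}$-linearity places $\mu(\Lambda X_{1}+(1-\Lambda)X_{2})=\Lambda\mu(X_{1})+(1-\Lambda)\mu(X_{2})$ between $\mu(X_{1})\wedge\mu(X_{2})$ and $\mu(X_{1})\vee\mu(X_{2})$, so (i) and (v) deliver both quasi-concavity and quasi-convexity. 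Item (vii) is then a tautology: $\mathcal{R}(\mu(\xi_{0}),\mu)\le\pi(\xi_{0})$ for every $\xi_{0}\in E$ gives $\inf_{Y}\mathcal{R}(Y,\mu)\le \inf_{\xi}\pi(\xi)$, while the reverse inequality is immediate from the definition; both sides equal $\inf_{\xi}\pi(\xi)$ independently of $\mu$.

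The main obstacle will be the \emph{in addition} half of (iii): producing a single $\xi^{\ast}\in E$ whose $\pi$-value is strictly below $\alpha$ on all of $\Omega$, rather than merely on each piece of some partition, genuinely needs (CSet) together with the countable form of (REG) from Remark \ref{remB}. This is precisely the ingredient that will be cited in Step 3 of the proof of Theorem \ref{EvQco} and in Corollary \ref{vicino}. Every other item reduces to finite-partition manipulations in $\mathcal{G}$ together with (iii), (iv) and the $L^{0}$-linearity of $\mu$.
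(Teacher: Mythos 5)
Your proposal is correct and follows essentially the same route as the paper: items (i), (ii), (v)--(vii) are handled by the same elementary partition and monotonicity arguments (the paper simply defers (iv)--(vi) to the earlier reference), and your treatment of the key ``in addition'' clause of (iii) --- extracting a decreasing sequence $\pi(\xi_n)\downarrow\mathcal{R}(Y,\mu)$, disjointifying the level sets $\{\pi(\xi_n)<\alpha\}$, and gluing via (CSet) and countable (REG) --- is exactly the paper's argument. No gaps.
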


\begin{proof}
i) and ii) follow trivially from the definition.

\noindent iii) The set $\{\pi (\xi )\mid \xi \in E,\;\mu (\xi )\geq Y\}$ is
clearly downward directed. Thus there exists a sequence $\left\{ \xi
_{m}^{\mu }\right\} _{m=1}^{\infty }\in E$ such that 
\begin{equation*}
\mu (\xi _{m}^{\mu })\geq Y\quad \forall \,m\geq 1,\quad \pi (\xi _{m}^{\mu
})\downarrow \mathcal{R}(Y,\mu )\quad \text{as }m\uparrow \infty .
\end{equation*}%
Now let $\mathcal{R}(Y,\mu )<\alpha $: consider the sets $F_{m}=\{\pi (\xi
_{m}^{\mu })<\alpha \}$ and the partition of $\Omega $ given by $G_{1}=F_{1}$
and $G_{m}=F_{m}\setminus G_{m-1}$. Since we assume that $E$ satisfies (CSet) and from the property (REG) we get: 
\begin{equation*}
\xi =\sum_{m=1}^{\infty }\xi _{m}^{\mu }\mathbf{1}_{G_{m}}\in E,\quad \mu
(\xi )\geq Y\text{ and }\pi (\xi )<\alpha .
\end{equation*}

\noindent iv), v) and vi) follow as in \cite{FM09}.

\noindent (vii) Notice that $\mathcal{R}(Y,\mu )\geq \inf_{\xi \in E}\pi
(\xi )$, $\forall \,Y\in L_{\mathcal{F}}^{0}$, implies: $\inf_{Y\in L^{0}(%
\mathcal{G})}\mathcal{R}(Y,\mu )\geq \inf_{\xi \in E}\pi (\xi ).$ On the
other hand, $\pi (\xi )\geq \mathcal{R}(\mu (\xi ),\mu )\geq \inf_{Y\in
L^{0}(\mathcal{G})}\mathcal{R}(Y,\mu )$, $\forall \,\xi \in E$, implies: $%
\inf_{Y\in L^{0}(\mathcal{G})}\mathcal{R}(Y,\mu )\leq \inf_{\xi \in E}\pi
(\xi ).$
\end{proof}

\begin{proof}[Proof of Theorem \protect\ref{EvQco}]
Let $\pi :E\rightarrow \bar{L}^{0}(\mathcal{G})$. There might exist a set $%
A\in \mathcal{G}$ on which the map $\pi $ is constant, in the sense that $%
\pi (\xi )\mathbf{1}_{A}=\pi (\eta )\mathbf{1}_{A}$ for every $\xi ,\eta \in
E$. For this reason we introduce 
\begin{equation*}
\mathcal{A}:=\{B\in \mathcal{G}\mid \pi (\xi )\mathbf{1}_{B}=\pi (\eta )%
\mathbf{1}_{B}\;\forall \,\xi ,\eta \in E\}.
\end{equation*}%
Applying Lemma \ref{L10} in Appendix with $F:=\left\{ \pi (\xi )-\pi (\eta
)\mid \xi ,\eta \in E\right\} $ (we consider the convention $+\infty -\infty
=0$) and $Y_{0}=0$ we can deduce the existence of two maximal sets $A\in 
\mathcal{G}$ and $A^{\vdash}\in \mathcal{G}$ for which $P(A\cap
A^{\vdash})=0 $, $P(A\cup A^{\vdash})=1$ and 
\begin{eqnarray}
\pi (\xi )=\pi (\eta ) &\text{ on }A&\text{for every }\xi ,\eta \in E, 
\notag \\
\pi (\zeta _{1})<\pi (\zeta _{2}) &\text{ on }A^{\vdash}&\text{ for some }%
\zeta _{1},\zeta _{2}\in E.  \label{888}
\end{eqnarray}%
Recall that $\Upsilon_{\pi}\in \mathcal{G} $ is the maximal set on which $%
\pi(\xi)\mathbf{1}_{\Upsilon_{\pi}}=+\infty\mathbf{1}_{\Upsilon_{\pi}}$ for
every $\xi\in E$ and $T_{\pi}$ its complement. Notice that $%
\Upsilon_{\pi}\subset A$. \newline
Fix $x\in E$ and $G=\{\pi (x)<+\infty \}$. For every $\varepsilon \in
L_{++}^{0}(\mathcal{G})$ we set 
\begin{equation}  \label{Yeps}
Y_{\varepsilon }=:0\mathbf{1}_{\Upsilon_{\pi}}+\pi(x)\mathbf{1}_{A\setminus
\Upsilon_{\pi}}+(\pi (x)-\varepsilon )\mathbf{1}_{G\cap
A^{\vdash}}+\varepsilon \mathbf{1}_{G^{C}\cap A^{\vdash}}
\end{equation}
and for every $\varepsilon\in L^0(\mathcal{G} )_{++}$ we set 
\begin{eqnarray}  \label{levelset}
\mathcal{C}_{\varepsilon}=\{\xi\in E\mid \pi(\xi)\mathbf{1}%
_{T_{\varepsilon}}\leq Y_{\varepsilon}\}.
\end{eqnarray}

\noindent\emph{Step 1:} on the set $A$, $\pi (x)=\mathcal{R}(\langle
x,x^{\prime }\rangle, x^{\prime })$ for any $x^{\prime} \in E^{\prime} $ and the
representation 
\begin{equation}
\pi (x)\mathbf{1}_{A}=\max_{x^{\prime }\in E^{\prime }}\mathcal{R}(\langle
x,x^{\prime }\rangle,x^{\prime })\mathbf{1}_{A}  \label{rapprDC}
\end{equation}
trivially holds true on $A$.

\bigskip

\noindent\emph{Step 2:} by the definition of $Y_{\varepsilon}$ we deduce
that if $\mathcal{C}_{\varepsilon}=\emptyset$ for every $\varepsilon\in
L^0_{++}$ then $\pi (x)\leq \pi(\xi)$ on the set $A^{\vdash}$ for every $%
\xi\in E$ and $\pi(x)\mathbf{1}_{A^{\vdash}}=\mathcal{R}(\langle x,x^{\prime
}\rangle,x^{\prime })\mathbf{1}_{A^{\vdash}}$ for any $x^{\prime} $.
The representation 
\begin{equation}
\pi (x)\mathbf{1}_{A^{\vdash}}=\max_{x^{\prime }\in E^{\prime }}\mathcal{R}%
(\langle x,x^{\prime }\rangle, x^{\prime })\mathbf{1}_{A^{\vdash}}
\label{rapprDC1}
\end{equation}
trivially holds true on $A^{\vdash}$. The thesis follows pasting together
equations (\ref{rapprDC}) and (\ref{rapprDC1})

\bigskip

\noindent\emph{Step 3:} we now suppose that there exists $\varepsilon\in
L^0_{++}$ such that $\mathcal{C}_{\varepsilon}\neq\emptyset$. The definition
of $Y_{\varepsilon}$ implies that $\mathcal{C}_{\varepsilon}\mathbf{1}_{A}=E%
\mathbf{1}_{A}$ and $A$ is the maximal element i.e. $A=A_{\mathcal{C}%
_{\varepsilon}}$ (given by Definition \ref{trivialcomponent}). Moreover this
set is conditionally evenly convex and $x$ is outside ${C}_{\varepsilon}$.
The definition of evenly convex set guarantees that there exists $x^{\prime
}_{\varepsilon}\in E^{\prime}$ such that 
\begin{equation}
\langle x,x^{\prime }_{\varepsilon}\rangle> \langle \xi,x^{\prime
}_{\varepsilon}\rangle\quad \text{ on } D_{\mathcal{C}_{\varepsilon}}=A^{%
\vdash},\; \forall\xi\in\mathcal{C}_{\varepsilon}.
\end{equation}
\emph{Claim:} 
\begin{equation}
\{\xi \in E\mid \langle x,x^{\prime }_{\varepsilon}\rangle\mathbf{1}%
_{A^{\vdash}}\leq \langle \xi,x^{\prime }_{\varepsilon}\rangle\mathbf{1}%
_{A^{\vdash}}\}\subseteq \{\xi \in E\mid \pi (\xi )>(\pi (x)-\varepsilon )%
\mathbf{1}_{G}+\varepsilon \mathbf{1}_{G^{C}}\text{ on }A^{\vdash}\}\text{.}
\label{666}
\end{equation}%
In order to prove the claim take $\xi\in E$ such that $\langle x,x^{\prime
}_{\varepsilon}\rangle\mathbf{1}_{A^{\vdash}}\leq \langle \xi,x^{\prime
}_{\varepsilon}\rangle\mathbf{1}_{A^{\vdash}}$. By contra we suppose that
there exists a $F\subset A^{\vdash}$, $F\in\mathcal{G} $ and $\mathbb{P}
(F)>0$ such that $\pi (\xi )\mathbf{1}_F\leq(\pi (x)-\varepsilon )\mathbf{1}%
_{G\cap F}+\varepsilon \mathbf{1}_{G^{C}\cap F}$. Take $\eta\in\mathcal{C}%
_{\varepsilon}$ and define $\overline{\xi}=\eta\mathbf{1}_{F^C}+\xi\mathbf{1}%
_{F}\in\mathcal{C}_{\varepsilon}$ so that we conclude that $\langle
x,x^{\prime }_{\varepsilon}\rangle>\langle \overline{\xi},x^{\prime
}_{\varepsilon}\rangle$ on $A^{\vdash}$. Since $\langle \overline{\xi}%
,x^{\prime }_{\varepsilon}\rangle=\langle \xi,x^{\prime
}_{\varepsilon}\rangle$ on $F$ we reach a contradiction.

\bigskip

Once the claim is proved we end the argument observing that 
\begin{eqnarray}
\pi (x)\mathbf{1}_{A^{\vdash}} &\geq &\sup_{x^{\prime }\in E^{\prime}}\mathcal{R}%
(\langle x,x^{\prime }\rangle,x^{\prime })\mathbf{1}_{A^{\vdash}}=\mathcal{R}%
(\langle x,x^{\prime }_{\varepsilon}\rangle,x^{\prime }_{\varepsilon} )%
\mathbf{1}_{A^{\vdash}}  \label{111} \\
&=&\inf_{\xi \in E}\{\pi (\xi )\mathbf{1}_{A^{\vdash}}\mid \langle
x,x^{\prime }_{\varepsilon}\rangle\mathbf{1}_{A}\leq \langle \xi,x^{\prime
}_{\varepsilon}\rangle\mathbf{1}_{A^{\vdash}}\}  \notag \\
&\geq &\inf_{\xi \in E}\{\pi (\xi )\mathbf{1}_{A^{\vdash}}\mid \pi (\xi
)>(\pi (x)-\varepsilon )\mathbf{1}_{G}+\varepsilon \mathbf{1}_{G^{C}}\text{
on }A^{\vdash}\}  \notag \\
&\geq &(\pi (x)-\varepsilon )\mathbf{1}_{G\cap A^{\vdash}}+\varepsilon 
\mathbf{1}_{G^{C}\cap A^{\vdash}},  \label{sequence}
\end{eqnarray}%
The representation (\ref{rapprPiSup}) follows by taking $\varepsilon $
arbitrary small on $G\cap A^{\vdash}$ and arbitrary big on $G^{C}\cap
A^{\vdash}$ and pasting together the result with equation (\ref{rapprDC}).
\end{proof}

\noindent

\begin{proof}[Proof of Proposition \protect\ref{USC}]
Fix $X\in E$ and consider the classes of sets%
\begin{eqnarray*}
\mathcal{A}:= &&\left\{ B\in \mathcal{G}\mid \forall \xi \in E\text{ }\pi
(\xi )\geq \pi (X)\text{ on }B\right\} , \\
\mathcal{A}^{\vdash }:= &&\{B\in \mathcal{G}\mid \exists \xi \in E\text{
s.t. }\pi (\xi )<\pi (X)\text{ on }B\}.
\end{eqnarray*}%
Then $\mathcal{A}=\left\{ B\in \mathcal{G}\mid \forall Y\in F\text{ }Y\geq
Y_{0}\text{ on }B\right\} $, where $F:=\left\{ \pi (\xi )\mid \xi \in
E\right\} $ and $Y_{0}=\pi (X)$. Applying Lemma \ref{L10}, there exist two
maximal elements $A\in \mathcal{A}$ and $A^{\vdash }\in \mathcal{A}^{\vdash
} $ so that: $P(A\cup A^{\vdash })=1,$ $P(A\cap A^{\vdash })=0,$ 
\begin{eqnarray*}
\pi (\xi )\geq \pi (X)\text{ on }A\text{ for every }\xi \in E &\text{and}%
&\exists \overline{\xi }\in E\text{ s.t. }\pi (\overline{\xi })<\pi (X)\text{
on }A^{\vdash }.
\end{eqnarray*}%
Clearly for every $\mu \in E^{\ast}$. 
\begin{equation}
\pi (X)\mathbf{1}_{A}\geq \mathcal{R}(\mu (X),\mu )\mathbf{1}_{A}\geq \pi (X)%
\mathbf{1}_{A}.  \label{777}
\end{equation}%
Consider $\delta\in L^0_{++}(\mathcal{G} )$. The set 
\begin{equation*}
\mathcal{O}:=\{\xi \in E\mid \pi (\xi )\mathbf{1}_{T_{\pi}}<\pi (X)\mathbf{1}%
_{A^{\vdash }}+(\pi(X)+\delta)\mathbf{1}_{A}\}
\end{equation*}
is open, $L^{0}(\mathcal{G})$-convex (from Remark \ref{remA} ii) and not
empty. Clearly $X\notin \mathcal{O}$ and $\mathcal{O}$ satisfies (CSet). We
thus can apply Theorem 3.15 in \cite{Guo} and find $\mu _{\ast }\in E^{\ast}$
so that 
\begin{equation*}
\mu _{\ast }(X)>\mu _{\ast }(\xi )\quad \text{on } H(\{X\},\mathcal{O}%
),\;\forall \,\xi \in \mathcal{O}.
\end{equation*}
Notice that $\mathbb{P} (H(\{X\},\mathcal{O})\setminus A^{\vdash})=0$. We
apply the argument in Step 3 of the proof of Theorem \ref{EvQco} to find
that 
\begin{equation*}
\{\xi \in E\mid \mu _{\ast }(X)\mathbf{1}_{A^{\vdash }}\leq \mu _{\ast }(\xi
)\mathbf{1}_{A^{\vdash }}\}\subseteq \{\xi \in E\mid \pi (\xi )\mathbf{1}%
_{A^{\vdash }}\geq \pi (X)\mathbf{1}_{A^{\vdash }}\}.
\end{equation*}%
From (\ref{TakingOutM})-(\ref{222}) we derive 
\begin{eqnarray*}
\pi (X)\mathbf{1}_{A^{\vdash }} &\geq &\mathcal{R}(\mu _{\ast }(X),\mu
_{\ast })\mathbf{1}_{A^{\vdash }}=\inf_{\xi \in E}\{\pi (\xi )\mathbf{1}%
_{A^{\vdash }}\mid \mu _{\ast }(X)\mathbf{1}_{A^{\vdash }}\leq \mu _{\ast
}(\xi )\mathbf{1}_{A^{\vdash }}\} \\
&\geq &\inf_{\xi \in E}\{\pi (\xi )\mathbf{1}_{A^{\vdash }}\mid \pi (\xi )%
\mathbf{1}_{A^{\vdash }}\geq \pi (X)\mathbf{1}_{A_{M}^{\vdash }}\}\geq \pi
(X)\mathbf{1}_{A_{M}^{\vdash }}.
\end{eqnarray*}%
The thesis then follows from (\ref{777}).
\end{proof}

\section{Appendix}

\begin{lemma}
\label{external}For any sets $\mathcal{C}\subseteq E$ and $\mathcal{D}%
\subseteq E$ set: 
\begin{eqnarray*}
\mathcal{A} &=&\left\{ B\in \mathcal{G}\mid \forall y\in \mathcal{D}^{cc}\;
\exists \xi \in \mathcal{C}^{cc}\text{ s.t. }\mathbf{1}_{B}y=\mathbf{1}%
_{B}\xi \right\} , \\
\mathcal{A}^{\vdash } &=&\left\{ B\in \mathcal{G}\mid \exists y\in \mathcal{D%
}^{cc}\text{ s.t. }y\text{ is outside}\mid _{B}\mathcal{C}^{cc}\right\} .
\end{eqnarray*}%
Then there exist the maximal set $A_{M}\in \mathcal{A}$ of $\mathcal{A}$ and
the maximal set $A_{M}^{\vdash }\in \mathcal{A}^{\vdash }$ of $\mathcal{A}%
^{\vdash }$, one of which may have zero probability, that satisfy 
\begin{equation*}
\forall y\in \mathcal{D}^{cc}\text{ }\exists \xi \in \mathcal{C}^{cc}\text{
s.t. }\mathbf{1}_{A_{M}}y=\mathbf{1}_{A_{M}}\xi 
\end{equation*}%
\begin{equation*}
\exists y\in \mathcal{D}^{cc}\text{ s.t. }y\text{ is outside}\mid
_{A_{M}^{\vdash }}\mathcal{C}^{cc}, 
\end{equation*}%
and $A_{M}^{\vdash }$ is the $\mathbb{P}$-a.s unique complement of $A_{M}$.

Suppose in addition that $\mathcal{D}=E$ and $\mathcal{C}=\mathcal{C}^{cc}.$
Then the class $\mathcal{A}$ coincides with the class $\mathcal{A}(\mathcal{C%
})=\left\{ B\in \mathcal{G}\mid \mathbf{1}_{A}E=\mathbf{1}_{A}\mathcal{C}%
\right\} $ introduced in the Notation \ref{trivialcomponent}. Henceforth:
the maximal set of $\mathcal{A}(\mathcal{C})$ is $A_{\mathcal{C}}=A_{M};$ $%
D_{\mathcal{C}}=A_{M}^{\vdash };$ $\mathbf{1}_{A_{\mathcal{C}}}E=\mathbf{1}%
_{A_{\mathcal{C}}}\mathcal{C}$; and there exists $y\in E$ s.t. $y$ is
outside $\mathcal{C}$. If $x\notin \mathcal{C}$ then $\mathbb{P}(D_{\mathcal{%
C}})>0$ and $\chi =\{y\in E\mid y$ is outside $\mathcal{C}\}\neq \varnothing 
$.
\end{lemma}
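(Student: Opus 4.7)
The plan is to produce $A_M$ and $A_M^{\vdash}$ by two applications of Remark \ref{remMAX}: the first directly to $\mathcal{A}$, and the second to an auxiliary class $\mathcal{B}$ whose maximal element I will show coincides with the complement of $A_M$. For the first step I would verify that $\mathcal{A}$ is closed under countable union. Given disjoint $\{B_n\}\subseteq\mathcal{A}$ and any $y\in\mathcal{D}^{cc}$, pick witnesses $\xi_n\in\mathcal{C}^{cc}$ with $\mathbf{1}_{B_n}y=\mathbf{1}_{B_n}\xi_n$, fix any $\xi_0\in\mathcal{C}^{cc}$, and define $\xi:=\sum_n\mathbf{1}_{B_n}\xi_n+\mathbf{1}_{(\cup_n B_n)^c}\xi_0$, which lies in $\mathcal{C}^{cc}$ by (CSet) and satisfies $\mathbf{1}_{\cup_n B_n}y=\mathbf{1}_{\cup_n B_n}\xi$. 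Remark \ref{remMAX} then supplies $A_M\in\mathcal{A}$, and I set $A_M^{\vdash}:=\Omega\setminus A_M$.

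For the dual side, introduce $\mathcal{B}:=\{B\in\mathcal{G}\mid\exists y\in\mathcal{D}^{cc},\ y\text{ outside}\mid_B\mathcal{C}^{cc}\}$. The analogous concatenation of witnesses $y_n\in\mathcal{D}^{cc}$ for disjoint $B_n\in\mathcal{B}$ produces a single $y\in\mathcal{D}^{cc}$ outside$\mid_{\cup_n B_n}\mathcal{C}^{cc}$: any coincidence $\mathbf{1}_A y=\mathbf{1}_A\xi$ on a positive-measure $A\subseteq\cup_n B_n$ would restrict to some $A\cap B_n$ of positive measure, contradicting outsideness of $y_n$. Hence $\mathcal{B}$ is closed under countable union and has a maximal element $B^*$. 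The inclusion $B^*\subseteq A_M^{\vdash}$ modulo null sets is immediate: a witness $y$ outside on $B^*$ cannot coincide with any $\xi\in\mathcal{C}^{cc}$ on the positive-measure subset $B^*\cap A_M\subseteq A_M$, where by definition every element of $\mathcal{D}^{cc}$ does so coincide.

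The main obstacle is the reverse inclusion $A_M^{\vdash}\subseteq B^*$. If it fails, set $F:=A_M^{\vdash}\setminus B^*$ with $\mathbb{P}(F)>0$. Then $F\notin\mathcal{A}$ (else $A_M\cup F$ would strictly enlarge $A_M$), so there is $y_0\in\mathcal{D}^{cc}$ with $\mathbf{1}_F y_0\neq\mathbf{1}_F\xi$ for every $\xi\in\mathcal{C}^{cc}$. The delicate point is that this is strictly weaker than $y_0$ being outside$\mid_F$, and the key trick is to extract outsideness by a secondary exhaustion: the class $\mathcal{F}:=\{F'\in\mathcal{G}\mid F'\subseteq F,\ \exists\xi\in\mathcal{C}^{cc},\ \mathbf{1}_{F'}y_0=\mathbf{1}_{F'}\xi\}$ is again closed under countable union by (CSet), hence has a maximal element $F_0$, and by construction $y_0$ is outside$\mid_{F\setminus F_0}\mathcal{C}^{cc}$. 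Moreover $\mathbb{P}(F\setminus F_0)>0$ (otherwise $F\in\mathcal{F}$, contradicting the choice of $y_0$), so $B^*\cup(F\setminus F_0)\in\mathcal{B}$ strictly enlarges $B^*$, a contradiction. Thus $B^*=A_M^{\vdash}\in\mathcal{A}^{\vdash}$; its maximality inside $\mathcal{A}^{\vdash}$ follows by the same easy argument used for $B^*\subseteq A_M^{\vdash}$.

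The additional assertions for $\mathcal{D}=E$ and $\mathcal{C}=\mathcal{C}^{cc}$ are essentially bookkeeping. The standing (CSet) assumption on $E$ gives $\mathcal{D}^{cc}=E$, so $\mathcal{A}$ coincides with $\mathcal{A}(\mathcal{C})$ of Notation \ref{trivialcomponent}, forcing $A_M=A_{\mathcal{C}}$ and $A_M^{\vdash}=D_{\mathcal{C}}$. Existence of $y\in E$ outside $\mathcal{C}$ is then just the statement $D_{\mathcal{C}}\in\mathcal{A}^{\vdash}$ proved above (vacuous when $\mathbb{P}(D_{\mathcal{C}})=0$). Finally, if $x\notin\mathcal{C}$ but $\mathbb{P}(D_{\mathcal{C}})=0$, then $A_{\mathcal{C}}=\Omega$ would give $E=\mathbf{1}_\Omega E=\mathbf{1}_\Omega\mathcal{C}\subseteq\mathcal{C}$, contradicting $x\notin\mathcal{C}$; so $\mathbb{P}(D_{\mathcal{C}})>0$ and consequently $\chi\neq\varnothing$ by the first part of the lemma.
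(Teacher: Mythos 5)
Your proof is correct, and it takes a recognizably different route from the paper's at the one genuinely delicate step. The paper also begins by showing that both $\mathcal{A}$ and $\mathcal{A}^{\vdash}$ are closed under countable union (via the same concatenation of witnesses you use) and invokes Remark \ref{remMAX} to extract the two maximal elements; the divergence is in proving $\mathbb{P}(A_M\cup A_M^{\vdash})=1$. There, the paper takes the leftover set $D$, produces $y$ with $\mathbf{1}_D\{y\}\cap\mathbf{1}_D\mathcal{C}^{cc}=\varnothing$, and then appeals to Lemma \ref{LemmaGuoSet} (the reformulation of Guo's Theorem 3.13 on hereditarily disjoint stratification) to obtain the maximal set $H$ on which $y$ is outside $\mathcal{C}^{cc}$, reaching a contradiction from $H\subseteq A_M^{\vdash}$ and $\mathbb{P}(D\cap H)>0$. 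You instead re-derive exactly that stratification by hand: your auxiliary class $\mathcal{F}$ of subsets of $F$ on which $y_0$ coincides with some element of $\mathcal{C}^{cc}$ is closed under countable union by (CSet), its maximal element $F_0$ plays the role of $\Omega\setminus H$, and $F\setminus F_0$ plays the role of $H$. So your ``secondary exhaustion'' is essentially an inline, self-contained proof of the content of Lemma \ref{LemmaGuoSet} restricted to $F$, which is what your argument buys: no reliance on the external citation. The cost is a slightly longer bookkeeping chain (defining $A_M^{\vdash}$ as the set-theoretic complement and then identifying it with the maximal element $B^*$ of $\mathcal{A}^{\vdash}$, rather than producing both maximal elements symmetrically and showing they partition $\Omega$), but the two organizations are equivalent. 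The concluding assertions for $\mathcal{D}=E$, $\mathcal{C}=\mathcal{C}^{cc}$ are handled the same way in both proofs.
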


\begin{proof}
The two classes $\mathcal{A}$ and $\mathcal{A}^{\vdash }$ are closed with
respect to \emph{countable} union. Indeed, for the family $\mathcal{A}%
^{\vdash }$, suppose that $B_{i}\in \mathcal{A}^{\vdash },$ $y_{i}\in 
\mathcal{D}^{cc}$ s.t. $y_{i}$ is outside$\mid _{B_{i}}\mathcal{C}^{cc}.$
Define $\widetilde{B}_{1}:=B_{1}$, $\widetilde{B}_{i}:=B_{i}\setminus
B_{i-1} $, $B:=\bigcup\limits_{i=1}^{\infty }\widetilde{B}%
_{i}=\bigcup\limits_{i=1}^{\infty }B_{i}$. Then $y_{i}$ is outside$\mid _{%
\widetilde{B}_{i}}\mathcal{C}^{cc}$, $\widetilde{B}_{i}$ are disjoint
elements of $\mathcal{A}^{\vdash }$ and $y^{\ast }:=\sum_{1}^{\infty
}y_{i}1_{\widetilde{B}_{i}}\in \mathcal{D}^{cc}$. Since $y_{i}1_{\widetilde{B%
}_{i}}=y^{\ast }1_{\widetilde{B}_{i}}$, $y$ is outside$\mid _{\widetilde{B}%
_{i}}\mathcal{C}^{cc} $ for all $i$ and so $y$ is outside$\mid _{B}\mathcal{C%
}^{cc}$. Thus $B\in \mathcal{A}^{\vdash }$. Similarly for the class $%
\mathcal{A}$.

The Remark \ref{remMAX} guarantees the existence of the two maximal sets $%
A_{M}\in \mathcal{A}$ and $A_{M}^{\vdash }\in \mathcal{A}^{\vdash }$, so
that: $B\in \mathcal{A}$ implies $B\subseteq A_{M}$; $B^{\vdash }\in 
\mathcal{A}^{\vdash }$ implies $B^{\vdash }\subseteq A_{M}^{\vdash }$.

Obviously, $P(A_{M}\cap A_{M}^{\vdash })=0,$ as $A_{M}\in \mathcal{A}$ and $%
A_{M}^{\vdash }\in \mathcal{A}^{\vdash }$. We claim that 
\begin{equation}
P(A_{M}\cup A_{M}^{\vdash })=1.  \label{bb}
\end{equation}%
To show (\ref{bb}) let $D:=\Omega \setminus \left\{ A_{M}\cup A_{M}^{\vdash
}\right\} \in \mathcal{G}$. By contradiction suppose that $\mathbb{P}(D)>0$.
From $D\subseteq (A_{M})^{C}$ and the maximality of $A_{M}$ we get $D\notin 
\mathcal{A}$. This implies that there exists $y\in \mathcal{D}^{cc}$ such
that 
\begin{equation}
\mathbf{1}_{D}\left\{ y\right\} \cap \mathbf{1}_{D}\mathcal{C}%
^{cc}=\varnothing  \label{1d}
\end{equation}%
and obviously $y\notin \mathcal{C}^{cc}$, as $\mathbb{P}(D)>0$. By the Lemma %
\ref{LemmaGuoSet} there exists a set $H_{\mathcal{C}^{cc},y}:=H\in \mathcal{G%
}$ satisfying $\mathbb{P}(H)>0$, (\ref{11}) and (\ref{12}) with $\mathcal{C}$
replaced by $\mathcal{C}^{cc}.$

Condition (\ref{12}) implies that $H\in \mathcal{A}^{\vdash }$ and then $%
H\subseteq A_{M}^{\vdash }$. From (\ref{11}) we deduce that there exists $%
\xi \in \mathcal{C}^{cc}$ s.t. $\mathbf{1}_{A}y=\mathbf{1}_{A}\xi $ for all $%
A\subseteq \Omega \backslash H.$ Then (\ref{1d}) implies that $D$ is not
contained in $\Omega \backslash H,$ so that: $\mathbb{P}(D\cap H)>0.$ This
is a contradiction since $D\cap H\subseteq D\subseteq (A_{M}^{\vdash })^{C}$
and $D\cap H\subseteq H\subseteq A_{M}^{\vdash }$.

Item 1 is a trivial consequence of the definitions.
\end{proof}

\begin{lemma}
\label{L10}With the symbol $\trianglerighteq $ denote any one of the binary
relations $\geq ,$ $\leq ,$ $=,$ $>$, $<$ and with $\vartriangleleft $ its
negation. Consider a class $F\subseteq \bar{L}^{0}(\mathcal{G})$ of random
variables, $Y_{0}\in \bar{L}^{0}(\mathcal{G})$ and the classes of sets 
\begin{eqnarray*}
\mathcal{A}:= &\{A\in &\mathcal{G}\mid \forall \,Y\in F\text{ }%
Y\trianglerighteq Y_{0}\text{ on }A\}, \\
\mathcal{A}^{\vdash }:= &\{A^{\vdash }\in &\mathcal{G}\mid \exists \,Y\in F%
\text{ s.t. }Y\vartriangleleft Y_{0}\text{ on }A^{\vdash }\}.
\end{eqnarray*}%
Suppose that for any sequence of disjoint sets $A_{i}^{\vdash }\in \mathcal{A%
}^{\vdash }$ and the associated r.v. $Y_{i}\in F$ we have $\sum_{1}^{\infty
}Y_{i}1_{A_{i}^{\vdash }}\in F$. Then there exist two maximal sets $A_{M}\in 
\mathcal{A}$ and $A_{M}^{\vdash }\in \mathcal{A}^{\vdash }\mathcal{\ }$such
that $P(A_{M}\cap A_{M}^{\vdash })=0$, $P(A_{M}\cup A_{M}^{\vdash })=1$ and 
\begin{eqnarray*}
&&Y\trianglerighteq Y_{0}\text{ on }A_{M}\text{, }\forall Y\in F\text{ } \\
&&\overline{Y}\vartriangleleft Y_{0}\text{ on }A_{M}^{\vdash }\text{, for
some }\overline{Y}\in F.
\end{eqnarray*}
\end{lemma}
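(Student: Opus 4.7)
My plan is to show that both classes $\mathcal{A}$ and $\mathcal{A}^{\vdash}$ are closed under countable union, apply Remark \ref{remMAX} to extract maximal elements $A_M$ and $A_M^{\vdash}$, and then verify the two probability identities by contradiction. The overall argument mirrors the one given for the classes $\mathcal{A}$ and $\mathcal{A}^{\vdash}$ inside the proof of Lemma \ref{external}; what changes is that the countable stability of $\mathcal{A}^{\vdash}$ is now imported as an explicit hypothesis (closure of $F$ under countable concatenations along disjoint sets in $\mathcal{A}^{\vdash}$).

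First I would observe that $\mathcal{A}$ is automatically closed under countable union: if $A_i\in\mathcal{A}$ and $A=\bigcup_i A_i$, then for every $Y\in F$ and every $i$, $Y\trianglerighteq Y_0$ a.s.\ on $A_i$, hence a.s.\ on $A$. For $\mathcal{A}^{\vdash}$ I would take $A_i^{\vdash}\in\mathcal{A}^{\vdash}$ with witnesses $Y_i\in F$, disjointify by setting $\widetilde{A}_1=A_1^{\vdash}$ and $\widetilde{A}_i=A_i^{\vdash}\setminus\bigcup_{j<i}A_j^{\vdash}$, note that $Y_i\vartriangleleft Y_0$ still holds on $\widetilde{A}_i$, and use the standing hypothesis to form $Y^{\ast}:=\sum_i Y_i\mathbf{1}_{\widetilde{A}_i}\in F$. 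Since $Y^{\ast}=Y_i$ on $\widetilde{A}_i$, it witnesses that $\bigcup_i A_i^{\vdash}=\bigcup_i\widetilde{A}_i\in\mathcal{A}^{\vdash}$.

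Then Remark \ref{remMAX} furnishes maximal elements $A_M\in\mathcal{A}$ and $A_M^{\vdash}\in\mathcal{A}^{\vdash}$. Disjointness $\mathbb{P}(A_M\cap A_M^{\vdash})=0$ is immediate: on $A_M\cap A_M^{\vdash}$ every $Y\in F$ satisfies $Y\trianglerighteq Y_0$, whereas the witness attached to $A_M^{\vdash}$ satisfies $Y\vartriangleleft Y_0$, so the intersection must be $\mathbb{P}$-null. For the covering identity I would set $D:=\Omega\setminus(A_M\cup A_M^{\vdash})$ and argue by contradiction assuming $\mathbb{P}(D)>0$. Since $A_M$ is maximal and $A_M\cup D\supsetneq A_M$, necessarily $D\notin\mathcal{A}$, i.e.\ there exists $Y\in F$ such that the set $D':=\{Y\vartriangleleft Y_0\}\cap D$ has strictly positive probability. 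But then $D'\in\mathcal{A}^{\vdash}$ is disjoint from $A_M^{\vdash}$, so $A_M^{\vdash}\cup D'\in\mathcal{A}^{\vdash}$ strictly contains $A_M^{\vdash}$, contradicting its maximality.

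The only nontrivial point is the closure of $\mathcal{A}^{\vdash}$ under countable union, which is the reason for imposing the concatenation hypothesis on $F$; the rest is bookkeeping. This is expected to be the main conceptual step, although it follows almost immediately from the disjointification trick above, so the proof is short.
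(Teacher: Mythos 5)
Your proof is correct and follows essentially the same route as the paper's: closure of both classes under countable union (with the disjointification trick and the concatenation hypothesis on $F$ for $\mathcal{A}^{\vdash}$), extraction of maximal elements via Remark \ref{remMAX}, and the same contradiction argument on $D=\Omega\setminus(A_M\cup A_M^{\vdash})$. The only cosmetic difference is that you conclude by adjoining $D'$ to $A_M^{\vdash}$ and invoking maximality, whereas the paper contradicts the essential-supremum property $P(B\cap(A_M^{\vdash})^{C})=0$ directly; these are equivalent.
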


\begin{proof}
Notice that $\mathcal{A}$ and $\mathcal{A}^{\vdash }$ are closed with
respect to \emph{countable} union. This claim is obvious for $\mathcal{A}$.
For $\mathcal{A}^{\vdash }$, suppose that $A_{i}^{\vdash }\in \mathcal{A}%
^{\vdash }$ and that $Y_{i}\in F$ satisfies $P(\left\{ Y_{i}\vartriangleleft
Y_{0}\right\} \cap A_{i}^{\vdash })=P(A_{i}^{\vdash }).$ Defining $%
B_{1}:=A_{i}^{\vdash }$, $B_{i}:=A_{i}^{\vdash }\setminus B_{i-1}$, $%
A_{\infty }^{\vdash }:=\bigcup\limits_{i=1}^{\infty }A_{i}^{\vdash
}=\bigcup\limits_{i=1}^{\infty }B_{i}$ we see that $B_{i}$ are disjoint
elements of $\mathcal{A}^{\vdash }$ and that $Y^{\ast }:=\sum_{1}^{\infty
}Y_{i}1_{B_{i}}\in F$ satisfies $P(\left\{ Y^{\ast }\vartriangleleft
Y_{0}\right\} \cap A_{\infty }^{\vdash })=P(A_{\infty }^{\vdash })$ and so $%
A_{\infty }^{\vdash }\in \mathcal{A}^{\vdash }$.

The Remark \ref{remMAX} guarantees the existence of two sets $A_{M}\in 
\mathcal{A}$ and $A_{M}^{\vdash }\in \mathcal{A}^{\vdash }$ such that:

(a) $P(A\cap (A_{M})^{C})=0$ for all $A\in \mathcal{A}$,

(b) $P(A^{\vdash }\cap (A_{M}^{\vdash })^{C})=0$ for all $A^{\vdash }\in 
\mathcal{A}^{\vdash }$.

Obviously, $P(A_{M}\cap A_{M}^{\vdash })=0,$ as $A_{M}\in \mathcal{A}$ and $%
A_{M}^{\vdash }\in \mathcal{A}^{\vdash }$. To show that $P(A_{M}\cup
A_{M}^{\vdash })=1$, let $D:=\Omega \setminus \left\{ A_{M}\cup
A_{M}^{\vdash }\right\} \in \mathcal{G}$. By contradiction suppose that $%
P(D)>0$. As $D\subseteq (A_{M})^{C}$, from condition (a) we get $D\notin 
\mathcal{A}$. Therefore, $\exists \overline{Y}\in F$ s.t. $P(\left\{ 
\overline{Y}\trianglerighteq Y_{0}\right\} \cap D)<P(D),$ i.e. $P(\left\{ 
\overline{Y}\vartriangleleft Y_{0}\right\} \cap D)>0$. If we set $B:=\left\{ 
\overline{Y}\vartriangleleft Y_{0}\right\} \cap D$ then it satisfies $%
P(\left\{ \overline{Y}\vartriangleleft Y_{0}\right\} \cap B)=P(B)>0$ and, by
definition of $\mathcal{A}^{\vdash }$, $B$ belongs to $\mathcal{A}^{\vdash }$%
. On the other hand, as $B\subseteq D\subseteq (A_{M}^{\vdash })^{C}$, $%
P(B)=P(B\cap (A_{M}^{\vdash })^{C}),$ and from condition (b) $P(B\cap
(A_{M}^{\vdash })^{C})=0$, which contradicts $P(B)>0$.
\end{proof}

\end{document}